\documentclass[12pt]{article}
\usepackage{amsmath, amsthm, amscd, amsfonts, amssymb, graphicx}

\usepackage{upgreek}
\usepackage{color}
\usepackage{tikz-cd}

\usepackage[margin=0.9in]{geometry}

\usepackage{microtype}
\usepackage{hyperref}
\usepackage[capitalise]{cleveref}
\hypersetup{
	colorlinks,
	linkcolor={red!70!black},
	citecolor={blue!80!black},
	urlcolor={blue!80!black}
}
\usepackage{enumerate}
\usepackage[breakable,skins]{tcolorbox}
\usepackage[normalem]{ulem}
\usepackage[
backend=biber,
style=numeric,
maxnames=99,
maxbibnames=99,
giveninits=true,
doi=false,
isbn=false,
url=false,
eprint=false
]{biblatex}
\DeclareFieldFormat[article,inbook,incollection,inproceedings,thesis,unpublished,misc]{title}{#1\isdot}

\renewbibmacro{in:}{}
\definecolor{humancolor}{RGB}{0, 102, 204}    
\definecolor{aicolor}{RGB}{180, 90, 50}       

\newlength{\marginbarwidth}
\newlength{\marginbarsep}
\makeatletter
\newtoks\HAI@footnotes

\newcommand{\haifootnote}[1]{%
  \footnotemark
  \global\HAI@footnotes=\expandafter{\the\HAI@footnotes\footnotetext{#1}}%
}

\newcommand{\HAI@emitfootnotes}{%
  \the\HAI@footnotes
  \global\HAI@footnotes={}%
}
\makeatother

\makeatletter
\newtcolorbox{humanparagraph}{%
  enhanced,
  breakable,
  blanker,
  borderline west={\marginbarwidth}{-\marginbarsep}{humancolor},
  left=0pt,
  right=0pt,
  top=2pt,
  bottom=2pt,
  before skip=0.5\baselineskip,
  after skip=0.5\baselineskip,
  after={\HAI@emitfootnotes},
  overlay unbroken and first={
    \node[humancolor, font=\tiny\scshape, rotate=90, anchor=north]
      at ([xshift=-\marginbarsep-\marginbarwidth/2-12pt, yshift=-13pt]frame.north west) {Human};
  }
}

\newtcolorbox{aiparagraph}{%
  enhanced,
  breakable,
  blanker,
  borderline west={\marginbarwidth}{-\marginbarsep}{aicolor},
  left=0pt,
  right=0pt,
  top=2pt,
  bottom=2pt,
  before skip=0.5\baselineskip,
  after skip=0.5\baselineskip,
  after={\HAI@emitfootnotes},
  overlay unbroken and first={
    \node[aicolor, font=\tiny\scshape, rotate=90, anchor=north]
      at ([xshift=-\marginbarsep-\marginbarwidth/2-12pt, yshift=-5pt]frame.north west) {AI};
  }
}
\makeatother

\newcommand{\defend}{\hfill\mbox{$\lozenge$}}

\theoremstyle{plain} 

\newtheorem{question}{Question}

\newtheorem{theorem}{Theorem}[section] 
\newtheorem{corollary}[theorem]{Corollary} 
\newtheorem{lemma}[theorem]{Lemma}         
\newtheorem{proposition}[theorem]{Proposition} 

\theoremstyle{definition} 
\newtheorem{defin}[theorem]{Definition} 

\newtheorem{remark}[theorem]{Remark}       

\AddToHook{env/corollary/begin}{\crefalias{theorem}{corollary}}
\AddToHook{env/lemma/begin}{\crefalias{theorem}{lemma}}
\AddToHook{env/proposition/begin}{\crefalias{theorem}{proposition}}
\AddToHook{env/defin/begin}{\crefalias{theorem}{defin}}
\AddToHook{env/remark/begin}{\crefalias{theorem}{remark}}

 \AtEndEnvironment{defin}{\defend}
 \AtEndEnvironment{remark}{\defend}

\usepackage{hyperref}
\usepackage{xcolor}
\usepackage{color,soul}
\usepackage{mathtools}

\newcommand{\HDpoly}{\chi_c}
\newcommand{\Q}{\mathbb{Q}}
\newcommand{\Z}{\mathbb{Z}}
\newcommand{\C}{\mathbb{C}}

\newcommand{\Pic}{\mathrm{Pic}}

\newcommand{\uul}{\underline{C}}
\newcommand{\Jcal}{\mathcal{J}}
\newcommand{\Mcal}{\mathcal{M}}
\newcommand{\Mbar}{\overline{\mathcal{M}}}
\newcommand{\Jbar}{\overline{\mathcal{J}}}

\newcommand{\Aut}{\mathrm{Aut}}

\title{Universal compactified Jacobians: cohomological invariance and boundary combinatorics}
\author{Rahul Pandharipande, Dan Petersen, Johannes Schmitt, Sofia Wood\\with an Appendix by Jeremy Feusi and Qizheng Yin}
\date{July 2026}
\begin{document}

\maketitle

 \begin{abstract}
Pagani and Tommasi have introduced a class of smoothable fine compactified Jacobians
 $\overline{\mathcal{J}}_{g,n}^d(\sigma)\rightarrow \overline{\mathcal{M}}_{g,n}$ over the moduli space of stable curves, depending nontrivially on the degree $d$ and the choice of a stability condition $\sigma$. A theorem of Migliorini--Shende--Viviani implies that the cohomology of $\overline{\mathcal{J}}_{g,n}^d(\sigma)$ is independent of $d$ and $\sigma$, a statement which is quite unexpected from the point of view of the boundary geometry of these spaces. We reprove this  independence statement using
 a direct combinatorial argument, summing up contributions of individual strata. 

The Appendix includes a result by J.~Feusi characterizing when ${\mathcal{J}}_{g,n}^d$
and ${\mathcal{J}}_{g,n}^{d'}$ are $S_n$-equivariantly isomorphic over $\mathcal{M}_{g,n}$, and
a result by Q.~Yin showing that $[\mathcal{J}^d_g]$ and $[\mathcal{J}^{d'}_g]$ are not always equal
in $K_0(\text{Var}_{\mathbb{C}})$.
 

\end{abstract}

\tableofcontents
\section{Introduction}
\subsection{Universal Jacobians}\label{intro: uncompactified}
Let $g$ and $n$ be non-negative integers satisfying $2g-2+n>0$. For every $d \in \mathbb{Z}$, 
there is a universal Jacobian
\begin{equation}
\mathcal{J}_{g,n}^d = \left\{(C, p_1, \ldots, p_n, L) \ \Bigg|\ 
\begin{array}{c}
C \text{ is a nonsingular curve of genus }g, \\ p_1, \ldots, p_n \in C \text{ are pairwise distinct},\\
L/C \text{ is a line bundle of degree }d
\end{array}
\right\}{\Big/\sim }
\end{equation}
over the moduli space $\mathcal{M}_{g,n}$ of nonsingular $n$-pointed genus $g$ curves. 

When $n>0$, the universal Jacobians $\mathcal J_{g,n}^d$ are all isomorphic over $\mathcal M_{g,n}$ as $d$ varies. Indeed, such isomorphisms are for example given by
\begin{equation}
    \label{twist map} \mathcal J_{g,n}^{d} \ni (C,p_1,\dots,p_n,L) \longmapsto (C,p_1,\dots,p_n,L \otimes \mathcal O((d'-d)p_n)) \in \mathcal J_{g,n}^{d'}.
\end{equation} The situation when $n=0$ is in sharp contrast: by \cite[Lemma 8.1]{Cap94}, the
universal Jacobians 
$\mathcal{J}_{g}^d$ and
$\mathcal{J}_{g}^{d'}$ are isomorphic over $\mathcal{M}_g$
if and only if 
$$d\equiv d' \quad \text{or}\quad {-d}\equiv d'
\mod 2g-2\, $$ with isomorphisms given by $$L \mapsto L \otimes \omega_C^{\otimes k} \quad \text{or} \quad L \mapsto L^{-1} \otimes \omega_C^{\otimes k},$$ for some $k$, respectively. When $g \geq 1$ and $n>1$, the isomorphism \eqref{twist map} is $S_{n-1}$-equivariant, but not $S_n$-equivariant. Since only twisting by all the points simultaneously respects the 
$S_n$-symmetry,  the
parallel claim{\footnote{The proof, due to Jeremy Feusi, is presented in \cref{Sect:Misc}. The main step of the argument can already be found in \cite[Corollary 6.1]{KP19}.}} is that the universal Jacobians 
$\mathcal{J}_{g,n}^d$ and
$\mathcal{J}_{g,n}^{d'}$ are $S_n$-equivariantly isomorphic over $\mathcal{M}_{g,n}$
if and only if 
$$d\equiv d' \quad \text{or}\quad {-d}\equiv d'
\mod \gcd(2g-2,n)\, .$$
The above differences are completely invisible on the level of cohomology. Indeed, one has the following:
\begin{proposition}\label{uncompactified theorem}
    For all $2g-2+n>0$ and all $d,d' \in \Z$, there exist isomorphisms
    \begin{eqnarray*} H^*(\mathcal J_{g,n}^{d},\Q) & \cong & H^*(\mathcal J_{g,n}^{d'},\Q)\,, \\
    H^*_c(\mathcal J_{g,n}^{d},\Q) & \cong & H^*_c(\mathcal J_{g,n}^{d'},\Q) \end{eqnarray*}
    compatible with $S_n$-action, with mixed Hodge structure, and with cup-product. 
\end{proposition}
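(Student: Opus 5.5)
The plan is to show that $H^*(\mathcal J^d_{g,n},\Q)$ splits canonically, as a ring with mixed Hodge structure and $S_n$-action, into pieces that depend only on the local system $R^1f_*\Q$ on $\mathcal M_{g,n}$. I would then check that this local system does not depend on $d$.

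Write $f\colon \mathcal J^d_{g,n}\to\mathcal M_{g,n}$. Fiberwise, $\mathcal J^d_{g,n}$ is a torsor under the relative Jacobian $\mathcal J^0_{g,n}$, and translations act trivially on the cohomology of an abelian variety. Hence there is a canonical identification $R^if_*\Q\cong\Lambda^i V$, where $V=R^1\pi_*\Q$ is the weight-one variation of Hodge structure of the universal curve. This identification is independent of $d$ and compatible with $S_n$. The $E_2$ page of the Leray spectral sequence is therefore $\bigoplus_{p,q}H^p(\mathcal M_{g,n},\Lambda^qV)$ for every $d$, including its multiplicative structure coming from the wedge product. It remains to show that the spectral sequence degenerates and that $H^*$ is canonically the associated graded, multiplicatively and as mixed Hodge structures.

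To get this splitting I would construct an algebraic self-map of $\mathcal J^d_{g,n}$ over $\mathcal M_{g,n}$ that acts as multiplication by $m$ on fibers. Set $N=\gcd(2g-2,n)$. Note that $N>0$ whenever $2g-2+n>0$, except for $g=1,n=0$, which is excluded; the case $g=0$ is trivial since the fibers are points. Choose an integer $m>1$ with $m\equiv 1 \bmod N$. Then $(m-1)d=a(2g-2)+bn$ for some integers $a,b$, and the map
$$\psi_m\colon L\longmapsto L^{\otimes m}\otimes\omega_C^{\otimes -a}\otimes\mathcal O\bigl(-b(p_1+\dots+p_n)\bigr)$$
is an $S_n$-equivariant endomorphism of $\mathcal J^d_{g,n}$ over $\mathcal M_{g,n}$. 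On each fiber it is multiplication by $m$ composed with a translation, so $\psi_m^*$ acts on $R^qf_*\Q$ by $m^q$.

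Since $\psi_m$ commutes with $f$, it acts on the Leray spectral sequence, acting by $m^q$ on $E_2^{p,q}$. The differentials $d_r\colon E_r^{p,q}\to E_r^{p+r,q-r+1}$ relate eigenspaces with different eigenvalues $m^q\neq m^{q-r+1}$, so they vanish. Likewise, the generalized eigenspace decomposition of $\psi_m^*$ on $H^k(\mathcal J^d_{g,n})$ splits the Leray filtration canonically:
$$H^k(\mathcal J^d_{g,n})=\bigoplus_q H^k_{(q)},\qquad H^k_{(q)}\cong H^{k-q}(\mathcal M_{g,n},\Lambda^qV).$$
Here $H^k_{(q)}$ is the $m^q$-eigenspace, and $\psi_m^*$ is in fact semisimple on it, because the finitely many eigenvalues are distinct. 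Because $\psi_m$ is algebraic, this decomposition is one of mixed Hodge structures and is $S_n$-equivariant. Because $\psi_m^*$ is a ring map, we have $H_{(q)}\cdot H_{(q')}\subset H_{(q+q')}$, and the product agrees with the $E_2$ product. This gives an isomorphism of rings with the $d$-independent algebra $\bigoplus H^p(\mathcal M_{g,n},\Lambda^qV)$.

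For compactly supported cohomology I would either run the same argument with $Rf_!$, since $f$ is proper and $H^*_c$ carries the same $\psi_m$-eigenspace decomposition, or deduce it from Poincaré duality on the smooth Deligne–Mumford stack $\mathcal J^d_{g,n}$, which is of constant dimension independent of $d$. The cup product is handled as before, as the module structure over $H^*$.

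The main obstacle I expect is the mixed-Hodge and multiplicative compatibility of the identification $H^k_{(q)}\cong H^{k-q}(\mathcal M_{g,n},\Lambda^qV)$. One has to check that the isomorphism $R^qf_*\Q\cong\Lambda^qV$, which is defined by translation-invariance, is an isomorphism of variations of mixed Hodge structure. One also has to check that the edge maps of the Leray spectral sequence are morphisms of mixed Hodge structures; this is where I would invoke Saito's theory, or Deligne's decomposition for smooth projective morphisms. The eigenvalue argument itself is routine once this is in place.
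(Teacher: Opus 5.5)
Your proof is correct, but it takes a different route from the paper at both main steps.

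\textbf{Independence of the local systems.} To show that $R^qf_*\Q$ does not depend on $d$, the paper does not use the torsor structure. It proves a stacky version of Schmid's Rigidity Theorem. It settles $n=1$ using the explicit isomorphisms $\mathcal J^d_{g,1}\cong\mathcal J^{d'}_{g,1}$, deduces $n=0$ from the surjectivity of $\pi_1(\mathcal M_{g,1})\to\pi_1(\mathcal M_g)$, and handles $n>1$ by pulling back from $\mathcal M_g$. Your argument instead notes that translations act trivially on the cohomology of the fibers. Hence every $R^qf_*\Q$ is canonically that of the abelian scheme $\mathcal J^0_{g,n}$. This is more elementary than the rigidity argument and treats $n=0$ directly.

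\textbf{The splitting.} The paper uses Deligne's degeneration theorem, which rests on hard Lefschetz, upgraded to mixed Hodge modules by Saito. It combines this with the fact that the cup-product map $\bigwedge R^1p_*\Q[-1]\to Rp_*\Q$ is an isomorphism for torsors under abelian schemes. The paper remarks that even defining this map requires an $\infty$-categorical enhancement. Your self-map $\psi_m$ exists because the class of the torsor $\mathcal J^d_{g,n}$ is killed by $\gcd(2g-2,n)$, via twisting by $\omega_C$ and $\sum p_i$. It is essentially a cohomological version of the Deninger--Murre argument mentioned in the paper's remark. It gives degeneration and a canonical splitting of the Leray filtration. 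Compatibility with products, Hodge structures and $S_n$ comes for free, since $\psi_m^*$ is an algebraic ring endomorphism commuting with $S_n$.

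\textbf{What each route buys.} The paper's route works for arbitrary torsors under abelian schemes. It also yields a multiplicative decomposition of $Rf_*\Q$ itself in $D^b\mathrm{MHM}(\mathcal M_{g,n})$, and this is the form the introduction uses for the Migliorini--Shende--Viviani deduction of the main theorem. Your route uses the special torsion property of $\mathcal J^d_{g,n}$. As written it gives isomorphisms of cohomology groups only. That suffices for the proposition and for the stratification argument, which needs only $\chi_c^{S_n}$. Applying idempotents in $\Q[\psi_m^*]$ to $Rf_*\Q$ would also recover the sheaf-level splitting.

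Both routes still need Saito's theory to identify $\mathrm{Gr}_L$ with $H^p(\mathcal M_{g,n},R^qf_*\Q)$ as mixed Hodge structures, as you point out.
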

The above result is well-known to experts, 
see \cite{BaeMSY}. We give another proof in \cref{rigidity section}. \cref{uncompactified theorem} cannot be lifted to the Grothendieck group of varieties:
the classes of $\mathcal{J}_{g}^d$ are {\em dependent} upon $d$ in general.{\footnote{The proof, due to Qizheng Yin, is presented in \cref{Sect:Misc}.}}

\subsection{Compactified universal Jacobians}

A classical problem in algebraic geometry is to extend the family  
$$\pi: \mathcal{J}_{g,n}^d \to \mathcal{M}_{g,n}$$ over the moduli space of
stable curves $\overline{\mathcal{M}}_{g,n}$ 
while preserving as many properties (flatness, properness, modular interpretation) as possible. 
Many such extensions have been found, 
see \cite{Cap94, Est01, Esteves_2016,fpv, fpv2, KP19, Mel16, melo,Melo2019Universal,  PT23, Pan96,   Sim94,  viviani}.
In our main results, we will discuss the family of \emph{smoothable fine compactified Jacobians} defined by Pagani--Tommasi \cite{PT23}. These compactified Jacobians are highly non-unique, for fixed $(g,n,d)$, and depend on the choice of a \emph{stability condition} $\sigma$. We write $$ \overline{\mathcal{J}}_{g,n}^d(\sigma) \rightarrow \overline{\mathcal{M}}_{g,n}\, ,$$ for the compactified Jacobian associated to the stability condition $\sigma$; it is a proper nonsingular Deligne--Mumford stack. 
By \cite[Corollary 6.19]{KP19}, even for $g=1$, there are examples $\Jbar_{1,n}^d(\sigma)$, $\Jbar_{1,n}^d(\sigma')$ of compactified Jacobians for different stability conditions $\sigma, \sigma'$ of equal degree which are not isomorphic as Deligne--Mumford stacks.

Stability conditions $\sigma$ as above do not always exist. When $n=0$, a fine compactified Jacobian extending $\Jcal_g^d \to \mathcal M_g$ exists if and only if $\gcd(g-1+d,2g-2)=1$. When $n>0$, fine compactified Jacobians exist for all $g$ and $d$, but it is not always the case that an $S_n$-equivariant stability condition exists \cite[Remark 7.7]{fpv}.

In light of the discussion in \cref{intro: uncompactified}, it is natural to ask to what extent the cohomology of $\overline{\mathcal{J}}_{g,n}^d(\sigma)$ depends on $d$ and on $\sigma$. The surprising answer is: \emph{not at all!}
\begin{theorem}\label{maintheorem}
    For all $2g-2+n>0$, all $d,d' \in \Z$, and all choices of Pagani--Tommasi stability conditions $\sigma$ and $\sigma'$, there exists an isomorphism
    \[ H^*(\overline{\mathcal J}_{g,n}^{d}(\sigma),\Q) \cong H^*(\overline{\mathcal J}_{g,n}^{d'}(\sigma'),\Q) \]of $\Q$-Hodge structures. If the stability conditions $\sigma$ and $\sigma'$ are both invariant for a subgroup $S_\lambda = S_{n_1}\times \dots \times S_{n_p}$, with $n_1+\dots+n_p=n$, then the isomorphism may be chosen to be equivariant with respect to the natural actions of the group $S_\lambda$. 
\end{theorem}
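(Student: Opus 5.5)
Since $\overline{\mathcal J}_{g,n}^d(\sigma)$ is a proper smooth Deligne--Mumford stack, its rational cohomology carries a pure Hodge structure in each degree. The plan is to reduce the statement to an equality in a Grothendieck group. Namely, it suffices to show equality of the $S_\lambda$-equivariant compactly supported Hodge--Deligne (weight) polynomials $\HDpoly$, taken in the Grothendieck ring of $S_\lambda$-equivariant mixed Hodge structures. The reason is as follows. For a smooth proper stack, $H^k$ is pure of weight $k$. Hence the class $\sum_k (-1)^k [H^k]$ in $K_0$ of equivariant Hodge structures determines each $[H^k]$ by weight. Moreover, polarizable pure Hodge structures, and representations of $S_\lambda$ on them, form a semisimple category. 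So equal classes give isomorphic objects, and the isomorphism can be chosen $S_\lambda$-equivariant.

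For the computation, stratify $\overline{\mathcal M}_{g,n}$ by stable graphs $\Gamma$, with strata $\mathcal M_\Gamma=\big(\prod_v \mathcal M_{g_v,n_v}\big)/\Aut(\Gamma)$. Over a curve of type $\Gamma$, a Pagani--Tommasi stability condition $\sigma$ picks out a set of stable pairs $(G,D)$. Here $G\subseteq E(\Gamma)$ is the set of edges along which the sheaf fails to be locally free, and $D$ is a multidegree on the partial normalization $\Gamma\setminus G$. Each such pair yields a locally closed piece that is a torus bundle over $\prod_v \mathcal J^{d_v}_{g_v,n_v'}$, with torus $(\C^*)^{b_1(\Gamma\setminus G)}$ up to the appropriate quotient. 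Its class is therefore $(\mathbb L-1)^{b_1(\Gamma\setminus G)}\prod_v \HDpoly(\mathcal J_{g_v,n_v'})$, twisted equivariantly by the stabilizer in $\Aut(\Gamma)$. By \cref{uncompactified theorem}, the factors $\HDpoly(\mathcal J^{d_v}_{g_v,n_v'})$ do not depend on the degrees $d_v$, and this holds compatibly with the symmetric group actions on the markings. So the contribution of the pair $(G,D)$ depends only on $G$, and the total contribution of $\Gamma$ is a sum over $G$ weighted by the number of stable multidegrees on $\Gamma\setminus G$.

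The key combinatorial input is the count of $\sigma$-stable multidegrees. For a fine stability condition, the stable multidegrees on a connected graph $\Gamma'$ form a set of representatives of the degree class group $\Pic^d(\Gamma')$, whose cardinality is the complexity $\kappa(\Gamma')$, the number of spanning trees. This count is independent of $d$ and $\sigma$. In the Pagani--Tommasi setting, stability in the presence of the removed edges $G$ should likewise yield exactly $\kappa(\Gamma\setminus G)$ classes, given that $\Gamma\setminus G$ is connected, which the smoothable fine setup guarantees. The main obstacle is making this work $\Aut(\Gamma)$-equivariantly, and also over all of $S_\lambda$ rather than the trivial group. I would need to show that the permutation representation of $\mathrm{Stab}(G)\times S_\lambda$ on the $\sigma$-stable multidegrees is independent of $\sigma$ and $d$, at least after multiplication by the $d$-independent vertex factors.

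My first attempt at this step is as follows. Fix an automorphism $\phi$ and count the $\phi$-fixed stable multidegrees. One would show this equals the number of $\phi$-fixed elements of $\Pic^0(\Gamma')$, which is a torsor count independent of choices. By Burnside, these fixed-point counts determine the permutation representation. To prove the fixed-point claim, I would use that $\sigma$ is $\phi$-invariant, so the stable set is a $\phi$-stable section of the torsor $\Pic^d(\Gamma')$. A $\phi$-equivariant torsor under a finite abelian group need not be equivariantly trivial, however. So one must instead argue directly, for instance by comparing fixed points on the Jacobian stack fibres via a Lefschetz-type count, or through a wall-crossing argument: crossing a single wall swaps stable multidegrees along one edge cut, and it should be checked that this preserves the equivariant class stratum by stratum. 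Once this is done, summing over $\Gamma$ and $G$ gives equal equivariant $\HDpoly$ for $(d,\sigma)$ and $(d',\sigma')$. Purity then yields the theorem.
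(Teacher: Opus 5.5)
\textbf{There is a genuine gap at the step you flag as the main obstacle.} Your reduction matches the paper: purity plus semisimplicity to pass to equivariant Hodge Euler characteristics, the stratification by $(\Gamma,G,D)$ (the paper's $(\Gamma,\Gamma_0,\underline d)$ with $G=E(\Gamma)\setminus E(\Gamma_0)$), and \cref{uncompactified theorem} on the torus-bundle strata. But the equivariant comparison of stable multidegree sets is left open, and the route you propose for it is false.

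Your fixed-point claim already fails for $g=1$, $n=2$, $S_\lambda=S_2$. Let $\Gamma=\Gamma_0$ have two genus-$0$ vertices, each with one leg, joined by two edges. Then $\Pic(\Gamma)=\Z^2/\langle(2,-2)\rangle$, and each $\Pic^d(\Gamma)$ has two elements. The automorphism $\phi$ exchanging the vertices (and legs) sends the class of $(a,d-a)$ to that of $(d-a,a)$. These differ by $(d-2a)(1,-1)$, which is a twist iff $d$ is even. So $\phi$ fixes both points of $\Pic^0(\Gamma)$ but has no fixed point on $\Pic^d(\Gamma)$ for $d$ odd, and only odd $d$ can carry $S_2$-invariant stability conditions.

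Hence the permutation representation of $\Aut^\lambda(\Gamma,\Gamma_0)$ on $\Pic^{d}(\Gamma_0)$ genuinely depends on $d$. (This group is an extension of a subgroup of $S_\lambda$ by $\Aut(\Gamma,\Gamma_0)$, not a product $\mathrm{Stab}(G)\times S_\lambda$.) Any argument that ignores which degrees can occur must therefore fail. A Lefschetz count, or wall-crossing at fixed degree, does not supply what is missing.

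The missing ingredients are as follows.

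\emph{Fixed degree.} Here $\sigma$-independence is free. By conditions (i) and (ii), the $\sigma$-stable and $\sigma'$-stable sets both map $\Aut(\Gamma,\Gamma_0)$-equivariantly and bijectively onto $\Pic^{d-e_{\Gamma,\Gamma_0}}(\Gamma_0)$. So no torsor-triviality issue arises.

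\emph{Changing degree: the constraint.} You need the arithmetic constraint that $S_\lambda$-invariant stability conditions exist only if $\gcd(g-1+d,2g-2,n_1,\dots,n_p)=1$ \cite[Remark 7.7]{fpv}. This condition transfers to the graph $\Gamma_0'$ obtained by cutting the edges outside $\Gamma_0$. That graph has genus $g-e_{\Gamma,\Gamma_0}$, degree $d-e_{\Gamma,\Gamma_0}$, and $2e_{\Gamma,\Gamma_0}$ extra indistinguishable legs.

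\emph{Changing degree: the bijections.} For such degrees the paper constructs explicit equivariant bijections $\Pic^{d_1}\to\Pic^{d_2}$ (\cref{Lem:dmultiplication}). Write $M=\gcd(2g-2,n_1,\dots,n_p)$.
\begin{enumerate}[(a)]
\item Translation by the invariant classes $\underline d^{\mathrm{can}}$ and $\underline d^{(k)}$ shifts $d-g+1$ by any multiple of $M$.
\item Choose an odd prime $a>|\Pic^0|$ in a prescribed unit class modulo $M$ (Dirichlet). Multiplication by $a$, followed by translation by $-\frac{a-1}{2}\underline d^{\mathrm{can}}$, multiplies $d-g+1$ by $a$.
\end{enumerate}
Together these connect all degrees with $d-g+1$ a unit mod $M$. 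This yields \cref{combinatorial claim}, which is exactly the input your sketch lacks.
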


\cref{maintheorem} is a direct consequence of a much more general theorem of Migliorini--Shende--Viviani \cite{MSV21}. Indeed, consider a compactified Jacobian $f\colon \overline{\mathcal J}_{g,n}^{d}(\sigma) \to \overline{\mathcal{M}}_{g,n}$. To prove \cref{maintheorem}, it suffices to prove that $Rf_*\Q$ is independent of $d$ and $\sigma$. By \cite[Theorem 1.7]{MSV21}, all summands in the Decomposition Theorem for $Rf_*\Q$ have full support. Therefore, $Rf_*\Q$ is the intermediate extension of its restriction to $\mathcal M_{g,n}$, so it suffices to prove that $Rf_*\Q\vert_{\mathcal{M}_{g,n}}$ is independent of $d$ and $\sigma$ as an object of the derived category of $S_n$-equivariant mixed Hodge modules. Independence of $\sigma$ is obvious, and independence of $d$ is proven here in \cref{rigidity section}  and constitutes the core of the proof of \cref{uncompactified theorem}. See also \cite[Section 2]{BaeMSY} and \cite[Corollary B]{fpv}.


From the point of view of the boundary combinatorics of these moduli spaces, \cref{maintheorem} is rather surprising. The space $\overline{\mathcal J}_{g,n}^{d}(\sigma)$ admits a stratification, in which each stratum is a torus bundle over a product of smaller moduli spaces $\mathcal J_{g_i,n_i}^{d_i}$, modulo a finite group. One could imagine calculating (say) the Hodge--Deligne polynomial for the space $\overline{\mathcal J}_{g,n}^{d}(\sigma)$ by adding up the Hodge--Deligne polynomials for all of the individual strata.  \cref{uncompactified theorem} implies that the Hodge--Deligne  polynomial of every such stratum is independent of the tuple of degrees $(d_i)$. But this does not immediately prove \cref{maintheorem}, because the sets of strata for different stability conditions $\sigma$ and $\sigma'$ are  different! \cref{maintheorem} thus predicts that there should be some kind of bijection between strata that makes the end result independent of the stability condition. 

A Pagani--Tommasi stability condition is essentially a purely combinatorial object. It is natural to ask for a direct combinatorial proof of \cref{maintheorem}, by explicitly adding up contributions from all strata and arguing that the terms in these sums for different stability conditions $\sigma$, $\sigma'$, can be paired in such a way that the end result is manifestly independent of the stability condition. The main goal of this paper is to give such a proof.

\begin{remark}
    A philosophically related invariance result for virtual Euler characteristics was proven by the fourth named author \cite{wood24}: 
    the orbifold Euler characteristic of the Pagani--Tommasi compactified
universal Jacobian is
$$\chi_{\mathrm{orb}}(  \overline{\mathcal{J}}_{g,n}^d(\sigma) ) = 
\frac{1}{{2^g}\cdot  g!} \, \chi(\overline{\mathcal{M}}_{0,2g+n})\, , $$
which is in particular independent of $d$ and $\sigma$.
\end{remark}

By adding up contributions of strata, the Hodge--Deligne polynomial of $\overline{\mathcal{J}}_{g,n}^d(\sigma)$
can be computed if 
the equivariant Euler characteristics $\chi_c^{S_{n'}}(\Jcal_{g',n'})$ of the uncompactified moduli spaces, with 
$$g' \leq g \ \ \text{and}\ \  2g' - 2 + n' \leq 2g-2+n\, ,$$
are known.
	At the moment, full knowledge of $\chi_c^{S_{n'}}(\Jcal_{g',n'})$ is
    only available for $g'\leq 2$. A computational discussion can be found in Section \ref{compute}.

\subsection{Further invariance properties} \label{intro: further}
One can ask more generally which other cohomological structures of $\overline{\mathcal{J}}_{g,n}^d(\sigma)$ are independent of $d$ and $\sigma\,$? A first natural question is whether the isomorphism in \cref{maintheorem} can be made compatible with the \emph{cup-product} on both sides. This question has been answered in the {\em negative} by 
Bae--Maulik--Shen--Yin \cite{BaeMSY}: the cohomology rings
$H^*(\overline{\mathcal{J}}_{g,n}^d(\sigma))$ depend upon 
$\sigma$ and $d$, in general.

An {\em intrinsic} cohomology ring $\mathbb{H}^*(\overline{\mathcal{J}}_{g,n})$
is defined by
Bae--Maulik--Shen--Yin \cite{BaeMSY}
by taking the associated graded algebra of
$H^*(\overline{\mathcal{J}}_{g,n}^d(\sigma))$
with respect to the perverse filtration
and is
proven  to be independent of
$\sigma$ and $d$. 
In an upcoming paper of 
Bae--Pixton \cite{BaePixtonCyclesUniversalJacobians}, the intrinsic
cohomology ring $\mathbb{H}^*(\overline{\mathcal{J}}_{g,n})$ is shown to be isomorphic to  an explicit subquotient of $\bigoplus_{m \geq 0} H^*(\Mbar_{g,n+m})^{S_m}$ equipped with the cross product.
These developments use several techniques: the Decomposition Theorem \cite{MR751966, MR2525735, MR2653248}, the Fourier--Mukai transformation \cite{arin,  MR2244106, MR4019895,MR607081} and the 
double ramification cycle \cite{BHPSS, HMPPS,JPPZ}. 

Even though the cup-product is not in general invariant, there is still a possibility of exploring integrals (or, more generally, push-forwards to $\overline{\mathcal{M}}_{g,n}$) of tautological classes{\footnote{See \cite{bae2025fourier} for a definition of the tautological ring  $RH^*(\overline{\mathcal{J}}_{g,n}^d(\sigma))\subset H^*(\overline{\mathcal{J}}_{g,n}^d(\sigma))$.}} over 
$\overline{\mathcal{J}}_{g,n}^d(\sigma)$ which are independent of $\sigma$ and $d$.
A beautiful set of such push-forwards
has been found by Bae--Molcho--Pixton, see \cite[Theorem 1.1]{bae2025fourier}.






A different direction concerns the bounded derived category of coherent sheaves on Deligne--Mumford stacks: 
\begin{question}Are the moduli spaces
$\overline{\mathcal{J}}_{g,n}^d(\sigma)$ and  $\overline{\mathcal{J}}_{g,n}^{d'}(\sigma')$
derived equivalent?
    \end{question}

The answer to Question A is affirmative.{\footnote{\label{foot:discussions}Discussions with Y.~Bae, D.~Maulik, J.~Shen, and F.~Viviani in Les Diablerets in May 2026 significantly improved
our understanding of the issues here (following
Melo--Rapagnetta--Viviani \cite{MR4019895} and Arinkin
\cite{arin}).}}
If $n\geq 1$ and the stability conditions are Kass--Pagani, the derived equivalence is proven in 
\cite[Theorem 5.1]{bae2025fourier}. In full generality (with $n\geq 0$ and with Pagani--Tommasi stability conditions),
the derived equivalence is \cite[Corollary B]{fpv}. 

The invariance of the additive structure of cohomology proven in \cref{maintheorem} can also be considered  in   Chen--Ruan orbifold
cohomology. The derived equivalence of Question A implies that the orbifold $K$-theories of 
$\overline{\mathcal{J}}_{g,n}^d(\sigma)$ and  $\overline{\mathcal{J}}_{g,n}^{d'}(\sigma')$ are additively isomorphic.
As a consequence, the Chen--Ruan orbifold cohomologies of  $\overline{\mathcal{J}}_{g,n}^d(\sigma)$ and  $\overline{\mathcal{J}}_{g,n}^{d'}(\sigma')$ are isomorphic as $\mathbb{Q}$-vector spaces. The following question concerning the Chen--Ruan grading is open:

\begin{question}Are the Chen--Ruan orbifold cohomologies of  $\overline{\mathcal{J}}_{g,n}^d(\sigma)$ and  $\overline{\mathcal{J}}_{g,n}^{d'}(\sigma')$ isomorphic as graded $\mathbb{Q}$-vector spaces?
    \end{question}

Stability conditions 
can also be used to find proper and nonsingular  extensions, 
$$\pi: \overline{\mathcal{U}}_{g,n}^{r,d}(\sigma) \to \overline{\mathcal{M}}_{g,n}\, ,$$
of the
universal moduli spaces of stable bundles
of rank $r$ and degree $d$,
$$\pi: \mathcal{U}_{g,n}^{r,d} \to \mathcal{M}_{g,n}\, ,$$
see \cite{Fringuelli} for a study of such extensions.
Given that the Betti numbers of the moduli space
of stable bundles $\mathcal{U}^{r,d}_C$ over a fixed nonsingular
curve $C$ in general {\em depend upon the degree $d$} \cite{MR702806,DesaleRam,  MR1806453}, the first reasonable question to 
ask in higher rank is for fixed $d$.

\begin{question}
    Are the Hodge numbers of  $\, \overline{\mathcal{U}}_{g,n}^{r,d}(\sigma)$
independent of the choice of stability condition~$\sigma$?

\end{question}

Note: the discussion and future questions in Section \ref{intro: further} were
sharpened based on communication with several
people (see footnote \ref{foot:discussions}).

\subsection*{Acknowledgements}
We thank
Younghan Bae,
Samir Canning,
Lycka Drakengren,
Aitor Iribar L\'opez, Davesh Maulik, Miguel Moreira, Aaron Pixton,
Junliang Shen, 
and Dimitri
Zvonkine for discussions about
the geometry of compactified universal Jacobians. The project began
during a visit to the
{\em Mathematisches Forschungsinstitut
Oberwolfach} in the summer of 2024. 
The results were presented in the {\em Algebraic Geometry Seminar} at Humboldt University in Berlin \cite{PandharipandeGeometryUniversalJacobian2025} in February 2025.
Discussions
with Sam Molcho and Filippo Viviani during the conference {\em Harmonies in Moduli} in Rome in the summer of 2025 clarified several issues about the log structures of the compactifications. Marco Fava, Nicola Pagani, and Filippo Viviani proved  \cite[Remark 7.7]{fpv} in response to a question of ours, which allowed us to  sharpen our results here. We are grateful to Jeremy Feusi and Qizheng Yin for contributing the arguments presented in the Appendix. We thank the referee for several useful comments.

R.P.\ and J.S.\ were supported
by SNF-200020-219369 and SwissMAP. D.P.\ was supported by the National Science Foundation
under Grant No.~DMS-2424441, and a Wallenberg Scholar fellowship.

\section{Cohomology of universal Jacobians}\label{rigidity section}


\subsection{Rigidity} 
	
	We will use the following Rigidity Theorem for variations of Hodge structure proven by Schmid \cite[(7.24)]{schmid}, extending previous results of Grothendieck, Griffiths, and Deligne. All schemes and stacks here are separated and of finite type over $\C$.

	\begin{theorem}[Rigidity Theorem]Let $B$ be a smooth connected algebraic space, $x \in B$ a point. Suppose $V$ and $W$ are polarizable variations of Hodge structure over $B$. Suppose given an isomorphism $\phi: V_x \to W_x$ of Hodge structures with an action of $\pi_1(B,x)$. Then there is a unique isomorphism $V\to W$ of variations of Hodge structure extending $\phi$. 
		\end{theorem}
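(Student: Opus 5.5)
The statement is Schmid's Rigidity Theorem, which the paper cites rather than proves. The plan is to derive it from two ingredients: the equivalence between local systems and $\pi_1$-representations, and Deligne's semisimplicity/fixed-part theorem (Schmid's version for polarizable variations over a smooth base).

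First I build the isomorphism of local systems. Since $B$ is connected, the functor $\mathbb{V}\mapsto \mathbb{V}_x$ is an equivalence between local systems on $B$ and finite-dimensional $\pi_1(B,x)$-representations. The $\pi_1$-equivariant map $\phi$ therefore extends uniquely to an isomorphism of underlying $\Q$-local systems $\Phi\colon V\to W$. This already gives uniqueness, since any morphism of variations is in particular a map of local systems, and such a map is determined by its value at one point.

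Next I show that $\Phi$ respects the Hodge filtrations. Consider the polarizable variation $H=\mathcal{H}om(V,W)=V^\vee\otimes W$ and view $\Phi$ as a global flat section of $H$, i.e.\ an element of $H^0(B,H)$. The theorem of the fixed part states that $H^0(B,H)$ carries a pure Hodge structure, and that restriction to any point, $H^0(B,H)\to H_x$, is an injective morphism of Hodge structures. Its image is exactly the $\pi_1$-invariants, so $H^0(B,H)\cong H_x^{\pi_1}$ as Hodge structures, with the Hodge structure induced from $H_x$. By hypothesis $\phi$ is a morphism of Hodge structures, i.e.\ a Hodge class of type $(0,0)$ in $H_x$, and it is $\pi_1$-invariant. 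Hence $\Phi$ is a flat section of type $(0,0)$ at $x$.

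A flat section lying in $F^0$ and in its conjugate at one point lies there everywhere: its $(0,0)$ condition is a closed condition that propagates along the base because $H^0(B,H)\to H_y$ is a morphism of Hodge structures for every $y$. So $\Phi$ is of type $(0,0)$ at every point, which means $\Phi_y$ is a morphism of Hodge structures for all $y$. It is therefore a morphism of variations. Applying the same argument to $\phi^{-1}$ shows that its inverse is also a morphism, so $\Phi$ is an isomorphism of variations.

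The main obstacle is the theorem of the fixed part itself. Its proof needs $B$ to be a smooth algebraic variety (or algebraic space), not merely a complex manifold: one passes to a smooth compactification with normal crossing boundary and uses Schmid's nilpotent orbit and $SL_2$-orbit theorems to control the asymptotics of the Hodge norm of flat sections. I would not reprove this; I would cite Schmid \cite[(7.24)]{schmid}, or Deligne's Hodge II, as the paper does. For an algebraic space, I would reduce to the variety case by pulling back along an étale surjection from a scheme and using uniqueness to descend.
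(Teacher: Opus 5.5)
Your argument is correct, but it differs from the paper in that the paper gives no proof: it simply quotes the result from \cite[(7.24)]{schmid}. You unpack that citation by one level. You extend $\phi$ to an isomorphism $\Phi$ of local systems via the $\pi_1$-equivalence, and then apply the theorem of the fixed part to the polarizable variation $\mathcal{H}om(V,W)$, in which $\Phi$ is a global flat section of type $(0,0)$ at $x$ and hence at every point. This is the standard deduction of rigidity from the fixed-part theorem. What it buys is transparency: rigidity becomes formal, and all the analytic depth sits in the fixed-part theorem (Schmid's norm estimates, which is where algebraicity of $B$ enters).

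Two small corrections.

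First, the input you must cite is Schmid's theorem of the fixed part, not \cite[(7.24)]{schmid}; that is the rigidity statement itself, so citing it there is circular. Also, Deligne's Hodge~II version is stated only for variations of geometric origin, so it does not cover arbitrary polarizable $V,W$.

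Second, the reduction from algebraic spaces does not work as phrased (``pull back to an \'etale cover by a scheme and descend by uniqueness''). A connected component of the cover need not meet the preimage of $x$, so on it you have no point where $\Phi$ is already known to be a morphism of Hodge structures. This is exactly why \cref{rigidity for stacks} insists on an atlas with connected fibers. The fix is an open--closed argument. Let $S\subseteq B$ be the set of $y$ at which $\Phi_y$ respects the Hodge filtrations.
\begin{itemize}
\item $S$ is closed, because $\Phi$ is flat (hence a holomorphic bundle map) and the $F^p$ are holomorphic subbundles.
\item $S$ is open: for $y\in S$, your fixed-part argument on a connected \'etale chart through $y$ shows that $S$ contains the open image of that chart.
\end{itemize}
Since $x\in S$ and $B$ is connected, $S=B$.
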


	We will need an extension of the Rigidity Theorem to stacks. The following will be sufficient for our purposes.
	
	\begin{proposition}\label{rigidity for stacks}Suppose that $B$ is a smooth connected Artin stack, such that there exists an atlas $U \to B$ with connected fibers. Then the conclusion of the Rigidity Theorem holds for $B$. Explicitly, suppose $V$ and $W$ are polarizable variations of Hodge structure over $B$, and let $x$ be a point of $B$. Suppose given an isomorphism $\phi: V_x \to W_x$ of Hodge structures with an action of $\pi_1(B,x)$. Then there is a unique isomorphism $V\to W$ of variations of Hodge structure extending $\phi$. \end{proposition}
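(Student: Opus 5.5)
The plan is to pull the problem back along the atlas $p\colon U\to B$, apply Schmid's Rigidity Theorem on $U$, and then descend the resulting isomorphism. Since $U$ is a smooth algebraic space and $p$ is smooth, $U$ is smooth. After replacing $U$ by a connected component, I may assume $U$ is connected. This is legitimate because $B$ is connected and each component of $U$ maps onto an open substack, so one can alternatively cover $B$ by finitely many components and glue using uniqueness. Choose a point $u\in U$ with $p(u)=x$; this is possible after moving $x$ within $B$, and since $B$ is connected and local systems are determined by their monodromy, I can transport $\phi$ along a path.

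The key topological input is that an atlas with connected fibers induces a surjection $\pi_1(U,u)\to\pi_1(B,x)$. This follows from the long exact sequence of homotopy groups for the fibration $U\to B$ (in the analytic/homotopy-type sense), whose fiber $F$ is connected, so $\pi_0(F)=0$. The pullbacks $p^*V$ and $p^*W$ are polarizable VHS on $U$, and $\phi$ is an isomorphism of Hodge structures $(p^*V)_u\to(p^*W)_u$ that commutes with $\pi_1(U,u)$, acting through $\pi_1(B,x)$. By the Rigidity Theorem there is a unique isomorphism $\Phi\colon p^*V\to p^*W$ of VHS extending $\phi$.

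Next, $\Phi$ must descend to $B$. By smooth descent for local systems and filtered vector bundles, it suffices to check that $\mathrm{pr}_1^*\Phi=\mathrm{pr}_2^*\Phi$ on $U\times_B U$, under the canonical identifications. Both sides are isomorphisms of VHS $q^*V\to q^*W$, where $q\colon U\times_BU\to B$. On a component of $U\times_BU$ containing a point lying over $(u,u)$ and corresponding to the identity of $x$, both restrict to $\phi$ at that point, so they agree on that component by the uniqueness part of rigidity, using that isomorphisms of local systems on a connected space agree if they agree at one point. For a general component, I would argue that each component of $U\times_BU$ contains a point $(u_1,u_2,\gamma)$ and that both pullbacks of $\Phi$ agree there. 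This holds because $\Phi$ is a flat map of local systems on $U$, and the fibers of $p$ are connected, so $\Phi$ on a fiber $p^{-1}(y)$ is determined compatibly; concretely, $\Phi$ restricted to a fiber is the constant isomorphism $V_y\to W_y$. Making this precise is the main obstacle. I would handle it by noting that $p^*V$ restricted to a fiber $p^{-1}(y)$ is canonically trivialized with fiber $V_y$, and $\Phi$ is flat, hence constant along the connected fiber. Thus $\Phi$ is the pullback of a well-defined map of local systems $V\to W$, since local systems on $B$ are equivalent to local systems on $U$ with descent data, and for connected fibers descent data is automatic.

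Once descent holds, we obtain $\Psi\colon V\to W$. It respects the Hodge filtrations because this can be checked after the faithfully flat pullback $p$. It extends $\phi$ by construction. Uniqueness follows since any two such extensions pull back to extensions of $\phi$ on $U$, which coincide by the Rigidity Theorem, and pullback along $p$ is faithful.
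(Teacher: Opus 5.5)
Your proposal is correct and is essentially the paper's argument: pull back to the atlas, apply Schmid's theorem there, and descend the resulting isomorphism using uniqueness. The paper avoids the detours you take by noting that $U$ and every fibered power $U\times_B\cdots\times_B U$ are automatically connected. Each projection is a base change of the smooth surjection $U\to B$ with connected fibers, and $B$ is connected. So there is no need to replace $U$ by a component (which would in any case destroy the atlas property needed for descent) or to move $x$. The descent condition on $U\times_B U$, your ``main obstacle,'' then follows directly from the uniqueness part of rigidity on that connected space.
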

	
	\begin{proof}Let $U^k$ be the $k$-fold fibered product $U \times_B U \times_B \dots \times_B U$. Let $y \in U$ be a preimage of $x$. From the choice of $y$ we get a compatible system of basepoints on all powers $U^k$. Let $f_k:U^k \to B$ be the projection. The fiber at the basepoint of $f^\ast_kV$ is canonically identified with $V_x$, for all $k$, and similarly for $W$. Since each $U^k$ is smooth and connected, the Rigidity Theorem furnishes a compatible system of isomorphisms $f^\ast_k V \to f^\ast_k W$ for all $k \geq 1$. These isomorphisms descend uniquely to an isomorphism $V \to W$. 
	\end{proof}
	
	\begin{corollary}\label{lem:rigidity}
	    Let $f:X\to Y$ be a morphism of Artin stacks, with $Y$ satisfying the conditions of \cref{rigidity for stacks}. Assume that $\pi_1(X)\to\pi_1(Y)$ is surjective. If $V$ and $W$ are variations of Hodge structure on $Y$ such that $f^\ast(V)$ is isomorphic to $f^\ast(W)$, then $V$ and $W$ are isomorphic.
	\end{corollary}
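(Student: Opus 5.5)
The plan is to reduce the statement to \cref{rigidity for stacks} by comparing monodromy representations at a single point. Fix a basepoint $y\in Y$ and a point $x\in X$ with $f(x)=y$; if $X$ is disconnected, first restrict to a connected component on which $\pi_1(X,x)\to\pi_1(Y,y)$ is surjective. The hypothesis gives an isomorphism $\psi\colon f^\ast V\to f^\ast W$ of variations of Hodge structure on $X$. Taking fibres at $x$ gives an isomorphism of Hodge structures
\[
\phi:=\psi_x\colon V_y=(f^\ast V)_x\longrightarrow (f^\ast W)_x=W_y .
\]
Because $\psi$ is a morphism of local systems, $\phi$ intertwines the monodromy actions of $\pi_1(X,x)$. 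The monodromy of $f^\ast V$ at $x$ is the monodromy of $V$ at $y$ precomposed with $f_\ast\colon\pi_1(X,x)\to\pi_1(Y,y)$, and the same holds for $W$.

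The key step is to upgrade this equivariance to $\pi_1(Y,y)$. Given $\gamma\in\pi_1(Y,y)$, surjectivity lets us write $\gamma=f_\ast(\tilde\gamma)$ with $\tilde\gamma\in\pi_1(X,x)$. Then
\[
\phi\circ\rho_V(\gamma)=\phi\circ\rho_{f^\ast V}(\tilde\gamma)=\rho_{f^\ast W}(\tilde\gamma)\circ\phi=\rho_W(\gamma)\circ\phi .
\]
So $\phi$ is an isomorphism of Hodge structures with $\pi_1(Y,y)$-action. Applying \cref{rigidity for stacks} to $Y$, which is allowed by assumption, extends $\phi$ uniquely to an isomorphism $V\cong W$ of variations of Hodge structure.

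I expect the main obstacle to be a technical point rather than the argument itself. \cref{rigidity for stacks} is stated for polarizable variations, so either polarizability is implicitly part of the meaning of ``variation of Hodge structure'' here (for geometric origin it holds), or it has to be added as a hypothesis. A second point is to make sure that fundamental groups and pullback of local systems behave as expected for Artin stacks. This follows from the same atlas argument used in \cref{rigidity for stacks}, or from the standard theory of the étale or topological fundamental group of the analytification. The surjectivity hypothesis then enters only in the single displayed step above.
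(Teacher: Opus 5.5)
Your proposal is correct and follows essentially the same route as the paper. In both, you take the fibre at $x$ of the given isomorphism $f^\ast V\cong f^\ast W$, use surjectivity of $\pi_1(X,x)\to\pi_1(Y,f(x))$ to upgrade $\pi_1(X,x)$-equivariance to $\pi_1(Y,f(x))$-equivariance, and then invoke \cref{rigidity for stacks}; your caveats about polarizability and connected components are points the paper leaves implicit.
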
 

    \begin{proof}
        By \cref{rigidity for stacks}, it suffices to find a point $y \in Y$ such that there exists an isomorphism $V_y \to W_y$ of Hodge structures equipped with an action of $\pi_1(Y)$. Now if $x \in X$ then $f^\ast(V)_x \cong V_{f(x)}$ and the action of $\pi_1(X)$ is through the surjection to $\pi_1(Y)$, so the result follows. 
    \end{proof}

\subsection{Universal Jacobians}
Let $\mathcal J_{g,n}^d$ be the degree $d$ universal Jacobian over $\Mcal_{g,n}$, and let  $$\pi^d : \Jcal_{g,n}^d \to \Mcal_{g,n}$$ be the projection (with the degree $d$ now explicitly labelled). 

\begin{proposition}\label{prop:isom of local systems}
    For any integers $d, d' $, and any $g$, $n$ and $q$ satisfying $2g-2+n>0$, there is an isomorphism $$R^q \pi^d_\ast \Z \cong R^q \pi_\ast^{d'} \Z $$ of $S_n$-equivariant polarizable variations of Hodge structures on $\Mcal_{g,n}$.

\end{proposition}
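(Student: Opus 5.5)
We will use \cref{lem:rigidity}: find a cover on which the two local systems become isomorphic as variations of Hodge structure, and check that the cover is surjective on fundamental groups. The fibers of $\pi^d$ over $(C,p_1,\dots,p_n)$ are torsors $\Pic^d(C)$ under $\Pic^0(C)=J(C)$. For an abelian variety torsor the cohomology is canonically $\Lambda^q H^1$, and $H^1(\Pic^d(C),\Z)\cong H^1(J(C),\Z)$ canonically, since translations act trivially on cohomology. One could therefore try to argue directly. However, making the identification of local systems compatible with Hodge structures and the $S_n$-action is cleanest via pullback to a space where the torsors are trivialized.

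Concretely, we work $S_n$-equivariantly by passing to the quotient stacks $[\Mcal_{g,n}/S_n]$ and $[\Jcal^d_{g,n}/S_n]$, since $S_n$-equivariant VHS on $\Mcal_{g,n}$ are VHS on the quotient stack. Consider the base $Y=[\Mcal_{g,n}/S_n]$, and take
\[ X = [\Jcal^1_{g,n+1}/S_n] \longrightarrow Y, \]
forgetting both the extra point and the line bundle. Alternatively, and more simply, take $X = [\Mcal_{g,n+1}/S_n]$, i.e.\ the universal curve. Over $X$ there is a tautological point $q$, and the maps $L\mapsto L\otimes\mathcal O((d'-d)q)$ give an isomorphism $f^\ast\Jcal^d\cong f^\ast\Jcal^{d'}$ of families over $X$. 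This is $S_n$-equivariant because $S_n$ does not move $q$. Pulling back $R^q\pi_\ast\Z$ along the base change $f$ gives $f^\ast R^q\pi^d_\ast\Z\cong f^\ast R^q\pi^{d'}_\ast\Z$ as VHS, because the isomorphism is induced by an algebraic isomorphism of smooth proper families.

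It remains to verify the hypotheses of \cref{lem:rigidity}, and this is where we expect the only real work. First, the map $\pi_1(X)\to\pi_1(Y)$ must be surjective. The forgetful map $\Mcal_{g,n+1}\to\Mcal_{g,n}$ is a fibration with connected fibers (punctured curves), so the long exact sequence of homotopy groups gives surjectivity, and this persists after quotienting by $S_n$. Second, $Y$ must admit an atlas with connected fibers. We will handle this in one of two ways: present $\Mcal_{g,n}$ as a quotient $[H/G]$ of a smooth connected Hilbert-scheme locus by a connected group $G=PGL_N$, with $S_n$ absorbed into $G\times S_n$; or appeal to the fact that the rigidity statement can be checked after passing to a finite étale cover by a connected smooth scheme, such as a level structure cover. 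The first option seems cleaner. With $G$ connected the atlas $H\to[H/G]$ has connected fibers. For the $S_n$-quotient we use $H\times^{S_n}$-style constructions, i.e.\ $[H/(G\times S_n)]$, realized with the atlas $H\times_{S_n}(G\times S_n)$ equal to $H$ if $S_n$ acts on $H$ by relabeling. One then checks that the fiber is $G\times S_n/S_n$, which is connected. Finally, the VHS are polarizable, being $R^q$ of a smooth projective family, and this gives the isomorphism over $\Q$ by rigidity.

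For integral coefficients, we will note that the local systems are torsion-free with a canonical identification $\Lambda^q R^1$. Alternatively, we can observe that the uniqueness in \cref{rigidity for stacks} lets the descended isomorphism respect the $\Z$-lattice, since on $X$ the isomorphism is integral and descent of a map of local systems is determined on fibers.
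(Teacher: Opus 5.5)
Your proposal is correct in substance and uses the same mechanism as the paper. Over the universal curve, the tautological point lets you twist by $\mathcal O((d'-d)q)$. Then \cref{lem:rigidity}, together with surjectivity on $\pi_1$ for a fibration with connected fibers, descends the resulting isomorphism.

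The difference is in how $S_n$ is handled. The paper never works on a quotient stack. For $n\geq 2$, both families are pulled back along the $S_n$-invariant map $\Mcal_{g,n}\to\Mcal_g$, so an isomorphism over $\Mcal_g$ pulls back to one that is automatically $S_n$-equivariant. Rigidity is then applied only to $\Mcal_{g,1}\to\Mcal_g$, where the atlas $\Mcal_g^1$ (a point plus a nonzero tangent vector, which kills all automorphisms) is explicit.

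You instead apply rigidity directly on $[\Mcal_{g,n}/S_n]$ with $X=[\Mcal_{g,n+1}/S_n]$. This treats all $n$ uniformly, but it requires a connected-fiber atlas for the quotient stack, and your construction of one is garbled as written. The map $H\to[H/(G\times S_n)]$ has fiber $G\times S_n$, which is disconnected for $n\geq 2$. The fix is to use $H/S_n$ as the atlas. The group $S_n$ acts freely on $H$ because the marked points are distinct, so $H/S_n$ is a smooth connected algebraic space. Then $[H/(G\times S_n)]\cong[(H/S_n)/G]$, and the fiber really is $(G\times S_n)/S_n\cong G$, as you intended. Your level-structure alternative is not literally covered by \cref{rigidity for stacks}, because fiber products of a finite \'etale cover are disconnected. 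It can be rescued by checking the Hodge condition on the already-descended map of local systems locally on the cover.

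Incidentally, the direct argument you set aside in your first paragraph already proves the proposition without any rigidity. Take the isomorphisms $R^q\pi^d_\ast\Z|_U\cong R^q\pi^0_\ast\Z|_U$ induced by local sections of the $\Jcal^0_{g,n}$-torsor $\Jcal^d_{g,n}$.
\begin{enumerate}[(i)]
\item They are independent of the chosen section, because translations act trivially on cohomology, so they glue.
\item They are fiberwise induced by algebraic isomorphisms, hence are morphisms of variations of Hodge structure.
\item They are natural, hence $S_n$-equivariant.
\end{enumerate}
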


\begin{proof}
	The result is trivial when $g=0$, since in this case all maps $\pi^d$ are isomorphisms, and when $n=1$, since $\Jcal_{g,1}^d \cong \Jcal_{g,1}^{d'}$ for any two $d,d'$. The result for $n>1$ follows from the result for $n=0$, since both sheaves $\smash{R^q \pi^d_\ast \Z}$ and $\smash{R^q \pi_\ast^{d'} \Z}$ are pulled back from $\Mcal_g$, and $S_n$ acts fiberwise. (Indeed, the family $\Jcal_{g,n}^d \to \Mcal_{g,n}$ is pulled back from $\Mcal_g$ for all $d$ and $n$.) 
	But the result for $n=0$ follows from the result for $n=1$, and Corollary \ref{lem:rigidity}. Indeed: $\pi_1(\Mcal_{g,1})\to \pi_1(\Mcal_g)$ is surjective, since a fiber bundle with connected fibers induces a surjection on fundamental groups, using the long exact sequence in homotopy associated to a fibration. Moreover, $\Mcal_g$ admits an atlas with connected fibers: if $g>0$, we may take the atlas to be the space $\Mcal_g^1$ parametrizing curves with a marked point and a nonzero tangent vector at that point; a curve of positive genus cannot have an automorphism fixing a point and a nonzero tangent vector in characteristic zero.
\end{proof}

\begin{proposition}\label{cohomology invariance on interior}
	For any integers $d,d'$ there are isomorphisms $$H^\ast(\Jcal_{g,n}^d,\Q)\cong H^\ast(\Jcal_{g,n}^{d'},\Q)\quad \text{and}\quad  H^\ast_c(\Jcal_{g,n}^d,\Q)\cong H^\ast_c(\Jcal_{g,n}^{d'},\Q)\, ,$$ both compatible with mixed Hodge structure, the action of $S_n$, and the cup-product. 
\end{proposition}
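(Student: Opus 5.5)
The plan is to deduce \cref{cohomology invariance on interior} from \cref{prop:isom of local systems} via the Leray spectral sequence of $\pi^d\colon \Jcal_{g,n}^d \to \Mcal_{g,n}$. The first step is to observe that $\pi^d$ is a smooth projective family of abelian torsors, namely torsors under the relative Jacobian. By Deligne's degeneration theorem, for such a family the complex $R\pi^d_\ast\Q$ decomposes in the derived category as $\bigoplus_q R^q\pi^d_\ast\Q[-q]$. The cleaner way to say this is that $R\pi^d_\ast\Q$ is a direct sum of shifted variations of Hodge structure. Then $H^\ast(\Jcal_{g,n}^d,\Q)=\bigoplus_{p+q=\ast}H^p(\Mcal_{g,n},R^q\pi^d_\ast\Q)$, and similarly for compactly supported cohomology. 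Combined with \cref{prop:isom of local systems}, this already gives an isomorphism of graded vector spaces. It is also compatible with $S_n$, since the isomorphisms of local systems are $S_n$-equivariant.

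To get compatibility with mixed Hodge structures and cup product, I would refine this. The key point is that the cohomology of an abelian torsor is canonically $\Lambda^\bullet H^1$, as an algebra, with no dependence on the choice of origin. Indeed, translations act trivially on cohomology, so the identification $H^\ast(T)\cong H^\ast(A)=\Lambda^\bullet H^1(A)$ is independent of the chosen point. Therefore $R^\bullet\pi^d_\ast\Q\cong \Lambda^\bullet \mathbb{V}$ as sheaves of algebras, where $\mathbb{V}=R^1\pi_\ast\Q$ is the weight one variation associated to the universal Jacobian of degree $0$. So the local systems of algebras are isomorphic for all $d$; this is consistent with \cref{prop:isom of local systems}, which is stated only additively.

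To upgrade the Leray identification to the level of cohomology rings with mixed Hodge structure, I would use a Deligne decomposition compatible with products. One option is Deligne's refined result, which gives a multiplicative splitting $R\pi_\ast\Q\cong\bigoplus R^q\pi_\ast\Q[-q]$ for abelian schemes, or more generally one compatible with a relatively ample class. An alternative, cleaner for torsors, is to compare with the $[N]$-isogeny eigen-decomposition. Multiplication by $N$ does not act on a torsor. However, pulling back along the finite étale base change that trivializes the torsor (for instance $\Jcal_{g,n+1}^d\to$ a cover), or using the map $\Jcal^d\to\Jcal^{Nd}$, $L\mapsto L^{\otimes N}$ composed with twists, I would realize the decomposition as a Beilinson--Deninger--Murre-type splitting defined algebraically. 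Then $R\pi^d_\ast\Q\cong\Lambda^\bullet\mathbb{V}$ as algebra objects in the derived category of mixed Hodge modules. Given this, $H^\ast(\Jcal^d_{g,n})\cong H^\ast(\Mcal_{g,n},\Lambda^\bullet\mathbb{V})$ as algebras with mixed Hodge structure and $S_n$-action. The same holds for $H^\ast_c$, using $R\pi_!=R\pi_\ast$ since $\pi$ is proper and $H_c^\ast(\Mcal_{g,n},-)$. The right-hand side is manifestly independent of $d$.

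The main obstacle is exactly this multiplicative, Hodge-compatible splitting of $R\pi^d_\ast\Q$ independent of $d$. For $n\ge1$ it is trivial, because the twist isomorphism \eqref{twist map} is an isomorphism over $\Mcal_{g,n}$. It is $S_{n-1}$-equivariant but not $S_n$-equivariant, so the real task is to prove that the induced map on cohomology is nevertheless $S_n$-equivariant. I would attempt this first: twisting by $\mathcal O(kp_i)$ versus $\mathcal O(kp_j)$ differ by a translation in the fibers, which acts trivially on the fiber cohomology. I would then have to show it also acts trivially on total cohomology. This can be done by checking that the two isomorphisms induce the same map on $R\pi_\ast\Q$; since they are homotopic fiberwise via a family of translations parametrized by a connected base, this should hold. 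The case $n=0$ then follows by descending from $n=1$ as in \cref{prop:isom of local systems}, using the Deligne splitting to reduce to the local systems.
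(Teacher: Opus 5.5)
Your skeleton matches the paper's: a Deligne--Saito splitting of $R\pi^d_*\Q$, then reduction to $R^1$. Your translation argument identifying $R^q\pi^d_*\Q$ with $\bigwedge^q R^1\pi^0_*\Q$ as $S_n$-equivariant variations of Hodge structure is correct, and is a more direct route to \cref{prop:isom of local systems} than rigidity.

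The gap is the step you call the main obstacle: a splitting of $R\pi^d_*\Q$ that is multiplicative, Hodge-theoretic and $S_n$-equivariant. The route you commit to fails, because what you set out to prove is false. The map on cohomology induced by the twist $L\mapsto L(kp_n)$ is in general \emph{not} $S_n$-equivariant. Twists at $p_i$ and $p_j$ differ by translation $t_s$ by the section $s=\mathcal O(k(p_i-p_j))$. Although $t_s^*$ is the identity on each $R^q\pi_*\Q$, on $R\pi_*\Q$ it is a unipotent automorphism whose off-diagonal part is governed by the class of $s$ (its normal function) in $\mathrm{Ext}^1(R^1\pi_*\Q,\Q)$. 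Your homotopy is only fiberwise; a global homotopy through translations needs a path of sections from $0$ to $s$, which this nonzero class obstructs.

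Concretely, take $g=1$, $n=2$. Then $\Jcal^0_{1,2}\cong\mathcal E\times_{\Mcal_{1,1}}(\mathcal E\smallsetminus 0)$ and the translation is $(a,e)\mapsto(a+ke,e)$. The Leray spectral sequence over $\Mcal_{1,1}$ gives $H^2(\Jcal^0_{1,2},\Q)\cong H^0(\Mcal_{1,1},H^2(E\times(E\smallsetminus 0)))=\Q\omega\oplus\Q\delta$. Here $\omega$ is the point class of the first factor and $\delta\neq 0$ is the invariant class in $H^1(E)\otimes H^1(E\smallsetminus 0)$. The translation sends $\omega\mapsto\omega+k\delta$, so the twist is not $S_2$-equivariant on $H^2$.

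Your $n=0$ step has the same problem. Rigidity descends isomorphisms of local systems, but the twist over $\Mcal_{g,1}$ covers no map $\Jcal^d_g\to\Jcal^{d'}_g$. So no statement about total cohomology rings descends, and you again need a multiplicative splitting over $\Mcal_g$.

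The paper gets this splitting from the algebra structure itself. In an $\infty$-enhancement of mixed Hodge modules (Tubach), a map $R^1p_*\Q[-1]\to Rp_*\Q$ coming from Deligne's decomposition extends, by the universal property of the free commutative algebra, to an algebra map $\bigwedge R^1p_*\Q[-1]\to Rp_*\Q$. This map is an isomorphism because it is one on cohomology sheaves, and the argument works verbatim for torsors. So compatibility with cup product, Hodge structure and $S_n$ reduces to the equivariant isomorphism $R^1\pi^d_*\Q\cong R^1\pi^{d'}_*\Q$, and no twist map is ever compared.

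Your $[N]$-idea could also be completed, but with an $S_n$-invariant twist instead of point twists. For $g\ge 2$ and $N\equiv 1\bmod 2g-2$, the map $\psi_N(L)=L^{\otimes N}\otimes\omega_C^{\otimes -(N-1)d/(2g-2)}$ is an $S_n$-equivariant endomorphism of $\Jcal^d_{g,n}$ over $\Mcal_{g,n}$ acting by $N^q$ on $R^q\pi^d_*\Q$. Its generalized eigenspace decomposition of $R\pi^d_*\Q$ is a multiplicative splitting. Your sketch as written does not supply this: base change along $\Mcal_{g,n+1}$ is not finite \'etale, and twists by points again break the $S_n$-symmetry.
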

\begin{proof}Suppose that $p:X\to Y$ is a smooth proper family. There is an isomorphism\begin{equation}
  R p_\ast \Q \cong \bigoplus_q R^q p_\ast \Q [-q]\,.  \label{deligne degeneration}
\end{equation}
	The fact that there is such an isomorphism in the derived category of constructible sheaves is Deligne's degeneration theorem \cite{delignedegeneration}. Saito \cite{saitomixedhodge} promoted this result to an isomorphism in his derived category of mixed Hodge modules. It follows that there are isomorphisms of mixed Hodge structures
	\begin{equation}\label{eq: two isomorphisms} H^k(X) \cong \bigoplus_{p+q=k} H^p(Y,R^q p_\ast\Q) \quad \text{and}\quad H^k_c(X) \cong \bigoplus_{p+q=k} H^p_c(Y,R^q p_\ast\Q)\,. \end{equation}
	(When $Y$ is smooth, these isomorphisms are interchanged under Poincar\'e duality.) 	
	Now suppose $p$ is an abelian scheme. Then the isomorphism \eqref{deligne degeneration} can be chosen to be compatible with cup-product, so that both isomorphisms \eqref{eq: two isomorphisms} are isomorphisms of rings. (This is a very rare property.) In fact, for an abelian scheme, the natural map
	\begin{equation}\label{subtle map}
	    \bigwedge R^1 p_\ast \Q[-1] \longrightarrow Rp_\ast\Q
	\end{equation} 
	induced by cup-product is an isomorphism; indeed, its mapping cone is acyclic. The same holds for torsors under abelian schemes, by the same argument. 
	
	After the above considerations, it suffices to prove that for the various projections $p=\pi^d : \Jcal_{g,n}^d \to \Mcal_{g,n}$ we have isomorphisms  $\smash{R^1 \pi^d_\ast \Q \cong R^1 \pi_\ast^{d'} \Q}$. But this was proven in \cref{prop:isom of local systems}.\end{proof}
	
	\begin{remark}The  multiplicative isomorphism \eqref{deligne degeneration} can also be realized from the more general motivic decomposition proven by Deninger and Murre \cite{deningermurre}. The theorem of Deninger and Murre was generalized to torsors for abelian schemes in \cite[Appendix A]{BaeMSY}.		
	\end{remark}

    \begin{remark}
        The construction of the morphism \eqref{subtle map} is somewhat subtle. The left-hand side is the free commutative algebra generated by $R^1 p_*\Q[-1]$. Since $Rp_\ast\Q$ is a commutative algebra object, the universal property of $\bigwedge$ says that the linear map  $R^1 p_*\Q[-1] \to Rp_*\Q$ in the derived category is adjoint to a ring homomorphism $\bigwedge R^1 p_*\Q[-1] \to Rp_*\Q $, and this is how \eqref{subtle map} is defined. But in order to speak of $Rp_*\Q$ as being a commutative algebra object, or of $\bigwedge$ having a universal property, we are obliged to treat the derived category as a stable $\infty$-category \cite{HigherAlgebra}. An enhancement of the derived category of mixed Hodge modules to a stable $\infty$-category compatibly with six operations was recently constructed by Tubach \cite{tubach}. 
    \end{remark}


\section{Boundary stratifications} \label{sect:boundary_strat}

\subsection{Hodge Euler characteristics}\label{sec: hodge euler characteristics}

Let $\mathsf{MHS}$ denote the category of $\Q$-mixed Hodge structures, always assumed to be graded-polarizable, and let $K_0(\mathsf{MHS})$ denote its Grothendieck group.

\begin{defin}
    Let $V=V^\ast$ be a bounded cohomologically graded object in $\mathsf{MHS}$. By the \emph{class of $V$ in $K_0(\mathsf{MHS})$} we mean the alternating sum
    \[ \sum_i (-1)^i [V^i].\]
    Similarly, if $V^{\ast,\ast}$ has a cohomological bigrading, then the class of $V$ is $\sum_{p,q} (-1)^{p+q}[V^{p,q}]$.
\end{defin}

\begin{defin}
    Let $X$ be a finite type Deligne--Mumford stack over $\C$. We define the \emph{Hodge Euler characteristic} $\chi_c(X)$ to be class of $H^\ast_c(X,\Q)$ in $K_0(\mathsf{MHS})$.
\end{defin}

\begin{lemma}\label{l1}
    Suppose that $X$ and $Y$ are both smooth and proper. Then $\chi_c(X) = \chi_c(Y)$ if and only if $H^i(X,\Q) \cong H^i(Y,\Q)$ as Hodge structures for all $i$.
\end{lemma}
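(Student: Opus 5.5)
The plan is to use purity of weights to separate the cohomological degrees inside the alternating sum. Since $X$ is a smooth proper Deligne--Mumford stack, $H^i(X,\Q)$ is isomorphic to the cohomology of a smooth projective variety's invariants under a finite group action (or, more directly, by Deligne's theory extended to proper smooth DM stacks via a coarse space with quotient singularities). Hence $H^i(X,\Q)$ is a pure, polarizable Hodge structure of weight $i$, and $H^i_c(X)=H^i(X)$ because $X$ is proper. The same holds for $Y$.

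The direction ``if'' is immediate: isomorphisms $H^i(X)\cong H^i(Y)$ for all $i$ give equal alternating sums $\sum_i(-1)^i[H^i]$ in $K_0(\mathsf{MHS})$.

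For the converse, the key step is a structural description of $K_0(\mathsf{MHS})$ restricted to pure objects. Every graded-polarizable mixed Hodge structure $V$ satisfies
\[ [V]=\sum_w [\mathrm{Gr}^W_w V] \]
in $K_0(\mathsf{MHS})$. Moreover, polarizable pure Hodge structures of a fixed weight form a semisimple abelian category. From this we will deduce that $K_0(\mathsf{MHS})$ is the free abelian group on isomorphism classes of simple polarizable pure Hodge structures, each with a well-defined weight. Concretely, the map $V\mapsto \bigoplus_w \mathrm{Gr}^W_w V$ is exact, because morphisms of mixed Hodge structures are strict for $W$. It therefore induces an isomorphism from $K_0(\mathsf{MHS})$ to the Grothendieck group of graded semisimple pure structures, which is free on simples.

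Given $\chi_c(X)=\chi_c(Y)$, we project onto the weight-$i$ component. The only contribution of weight $i$ to $\chi_c(X)$ comes from $(-1)^i[H^i(X)]$, so $[H^i(X)]=[H^i(Y)]$ in the free group on simples of weight $i$. Two semisimple objects with the same class have the same multiplicities of simple summands, hence they are isomorphic. This gives $H^i(X)\cong H^i(Y)$ as Hodge structures.

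The main point needing care is the purity of $H^i$ for proper smooth DM stacks, together with the semisimplicity and polarizability needed for the freeness argument. We will handle purity by comparing with the coarse moduli space: rational cohomology agrees with that of the coarse space, which is proper with quotient singularities and hence has pure cohomology. Alternatively, we can pass to a finite cover by a smooth projective scheme, where available, and take invariants. The rest of the argument is formal.
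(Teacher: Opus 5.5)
Your proposal is correct and follows the paper's argument: purity lets you read off $(-1)^i[H^i(X,\Q)]$ as the weight-$i$ part of $\chi_c(X)$, and semisimplicity of polarizable pure Hodge structures turns equality of classes into an isomorphism. You add detail the paper leaves implicit (the identification of $K_0(\mathsf{MHS})$ via $\mathrm{Gr}^W$ with the free group on simple pure structures, purity for DM stacks via the coarse space, and the trivial converse), but the route is the same.
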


\begin{proof}By purity of weights, the class of $H^i(X,\Q)=H^i_c(X,\Q)$ in $K_0(\mathsf{MHS})$ can be recovered from $\chi_c(X)$ by taking all terms of weight $i$. But this determines the Hodge structure $H^i(X,\Q)$ completely, since the category of pure polarizable Hodge structures is semisimple. 
\end{proof}

\begin{lemma}\label{l2}
    Let $X = \bigcup X_{\alpha \in A}$ be a stratification into pairwise disjoint locally closed substacks. Then $\chi_c(X) = \sum_{\alpha \in A} \chi_c(X_\alpha)$.
\end{lemma}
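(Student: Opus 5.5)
The plan is to reduce to the case of a single closed substack and its open complement, and then conclude by induction on the number of strata. Here "stratification" means a finite decomposition into locally closed pieces. The standard convention, which I will assume, is that the strata can be ordered $X_{\alpha_1},\dots,X_{\alpha_N}$ so that each union $X_{\alpha_1}\cup\dots\cup X_{\alpha_k}$ is closed in $X$. If the indexing is not of this form, I would first refine by Noetherian induction. Take a stratum whose closure contains no other stratum; its closure is then contained in the union of itself with lower-dimensional pieces. Locally closed pieces are open in their closure, so I can peel off a closed piece at each step.

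The key input is the long exact sequence of compactly supported cohomology for an open immersion $j\colon U\hookrightarrow X$ with closed complement $i\colon Z\hookrightarrow X$:
\[ \cdots \to H^k_c(U,\Q)\to H^k_c(X,\Q)\to H^k_c(Z,\Q)\to H^{k+1}_c(U,\Q)\to\cdots \]
This comes from the exact triangle $j_!j^*\Q\to\Q\to i_*i^*\Q\to$ upon applying $R\Gamma_c$. It must be a sequence of mixed Hodge structures, and I will get that from Saito's formalism of mixed Hodge modules, where the triangle and the functors $j_!$, $i_*$ and $R\Gamma_c$ all exist. For Deligne--Mumford stacks I would pass to a finite-type presentation, or else work with rational coefficients, where the coarse moduli space computes $H^*_c(-,\Q)$ compatibly with Hodge structures. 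Since all groups are finite dimensional and vanish outside a bounded range, the alternating sum of classes in $K_0(\mathsf{MHS})$ over an exact sequence is zero. This gives $\chi_c(X)=\chi_c(U)+\chi_c(Z)$.

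The induction then runs as follows. Let $Z=X_{\alpha_1}\cup\dots\cup X_{\alpha_{N-1}}$, which is closed in $X$, so that $X_{\alpha_N}=X\setminus Z$ is open. Then $\chi_c(X)=\chi_c(X_{\alpha_N})+\chi_c(Z)$. The substack $Z$ carries an induced stratification with the same property and fewer strata, so the induction hypothesis applies to it.

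The main obstacle is justifying that the localization sequence is a sequence of mixed Hodge structures for Deligne--Mumford stacks, as opposed to just for varieties. I would handle it with the coarse-space reduction. The coarse map $X\to X^{\mathrm{coarse}}$ is proper and quasi-finite, so $R\pi_*\Q\cong\Q$ rationally. It also restricts compatibly to locally closed substacks, since the coarse spaces of the strata are homeomorphic to their images. This reduces the lemma to the classical statement for varieties, or algebraic spaces, due to Deligne.
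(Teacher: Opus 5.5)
Your proposal is correct and is the same argument as the paper's: induct on the number of strata to reduce to a single open/closed decomposition $U\subset X$. Then use that the long exact sequence of compactly supported cohomology is a sequence of mixed Hodge structures, so classes add in $K_0(\mathsf{MHS})$. Your extra points are details the paper leaves implicit, not a different route: ordering the strata so that initial unions are closed, which the paper tacitly assumes, and the coarse-space reduction for Deligne--Mumford stacks. Your refinement sketch is best done by Noetherian induction: remove a dense open subset of $X$ contained in a single stratum.
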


\begin{proof}The proof reduces by induction to the case $|A|=2$, in which case one uses the long exact sequence
\[ \dots \to H^i_c(U,\Q) \to H^i_c(X,\Q)\to H^i_c(X\setminus U,\Q)\to H^{i+1}_c(U,\Q)\to \dots \]
associated to an open immersion $U \subset X$.   
\end{proof}

Suppose that a finite group $G$ acts on $X$. Then $\chi_c(X)$ admits an equivariant refinement $\chi_c^{G}(X)$, valued in $K_0(\mathsf{MHS}^G)$, where $\mathsf{MHS}^G$ denotes the category of $\Q$-mixed Hodge structures with an action of $G$. The evident analogue of \cref{l1} is true in the equivariant setting, with the same argument. Moreover, the same argument as \cref{l2} implies that $\chi_c^G(-)$ is additive over $G$-invariant stratifications. 


The following lemma gives a generalization in the case where $G$ is allowed to permute the strata.

	\begin{lemma}\label{equivariant stratification} Let $X=\bigcup_{\alpha \in A} X_\alpha$ be a stratification into pairwise disjoint locally closed substacks. Suppose given a finite group $G$ acting on $X$, in such a way that for all $g \in G$ and $\alpha \in A$, $g X_\alpha = X_\beta$ for some $\beta \in A$. Then\[ \chi_c^G(X) = \sum_{\alpha \in A/G} \operatorname{Ind}_{\mathrm{Stab}(\alpha)}^G \chi_c^{\mathrm{Stab}(\alpha)}(X_\alpha),\]
		where the sum runs over any set of orbit representatives of $G$ acting on $A$. 
	\end{lemma}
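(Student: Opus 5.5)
The plan is to group strata into $G$-orbits and treat each orbit separately. For each orbit $O = G\cdot\alpha \subset A$, the union $X_O := \bigcup_{\beta \in O} X_\beta$ is $G$-invariant. Choose a filtration of $X$ by $G$-invariant closed substacks so that the successive differences are exactly these sets $X_O$, each locally closed. This is possible because a stratification comes with a partial order in which closures of strata are unions of strata, and $G$ preserves that order; we take unions of orbits in a suitable order.

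Then the $G$-equivariant version of \cref{l2} (additivity of $\chi_c^G$ over $G$-invariant stratifications, proved via the $G$-equivariant long exact sequence for an open immersion) gives
\[ \chi_c^G(X) = \sum_{O \in A/G} \chi_c^G(X_O). \]
This reduces the statement to a single orbit.

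Fix $O = G\cdot\alpha$ and write $H = \mathrm{Stab}(\alpha)$. The key claim is that $X_O$ is the disjoint union $\bigsqcup_{gH \in G/H} gX_\alpha$. Each piece $gX_\alpha$ is open and closed in $X_O$: the strata $X_\beta$ with $\beta \in O$ are pairwise disjoint and all have the same dimension, so no one of them lies in the closure of another. Hence
\[ H^\ast_c(X_O,\Q) = \bigoplus_{gH\in G/H} H^\ast_c(gX_\alpha,\Q), \]
and $G$ permutes the summands transitively, with $H$ stabilizing the summand $H^\ast_c(X_\alpha,\Q)$. A $G$-representation of this shape is canonically $\operatorname{Ind}_H^G$ of the $H$-summand; concretely, the natural map $\Q[G]\otimes_{\Q[H]} H^\ast_c(X_\alpha) \to H^\ast_c(X_O)$ is an isomorphism. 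This isomorphism is compatible with mixed Hodge structure, since each $g$ acts by an isomorphism of stacks and hence by morphisms of mixed Hodge structures. Passing to classes in $K_0(\mathsf{MHS}^G)$ gives $\chi_c^G(X_O) = \operatorname{Ind}_H^G \chi_c^H(X_\alpha)$. Summing over orbits proves the lemma, and the answer does not depend on the choice of representatives, since conjugate stabilizers give isomorphic induced objects.

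The main obstacle is the topological claim that, within one orbit, the translates $gX_\alpha$ are mutually open and closed in $X_O$, so that $X_O$ really is the disjoint union of them. If the stratification is not assumed to satisfy the frontier condition, I would instead argue directly. Since $X_\beta$ and $X_{g\beta} = gX_\beta$ are isomorphic, they have equal dimension. If $X_{g\beta}$ met $\overline{X_\beta}\setminus X_\beta$, we would get a dimension drop, which contradicts this unless we first refine the stratification. So I would first refine to a $G$-invariant stratification by taking $G$-orbits of a stratification satisfying the frontier condition. Both sides of the formula are additive under such a refinement: on the right-hand side, this uses transitivity of induction applied to substrata of $X_\alpha$ with their stabilizers in $H$.
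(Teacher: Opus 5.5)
Your proposal is correct and takes essentially the same route as the paper: reduce to a single $G$-orbit of strata by equivariant additivity, observe that this orbit is a topological disjoint union $\bigsqcup_{gH\in G/H} gX_\alpha$, and identify $H^\ast_c$ of it with $\operatorname{Ind}_{H}^G H^\ast_c(X_\alpha,\Q)$ for $H=\mathrm{Stab}(\alpha)$. The paper asserts this disjointness (``necessarily'') without justification, and it genuinely needs something like the frontier condition; your dimension argument, together with the $G$-invariant refinement and transitivity of induction, supplies the care that the written proof omits.
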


\begin{proof}
	Since $\chi_c^G(-)$ is additive over $G$-invariant stratifications, we may treat one $G$-orbit of strata at a time, and assume that $G$ acts transitively on $A$. Then necessarily $X = \coprod_{\alpha \in A} X_\alpha$ is actually a disjoint union, and 
	\[ H^*_c(X,\Q) = \bigoplus_{\alpha \in A} H^*_c(X_\alpha,\Q) \cong  \operatorname{Ind}_{\mathrm{Stab}(\alpha)}^G H^*_c(X_\alpha,\Q)\]
	for any choice of $\alpha \in A$.
\end{proof}

 The above discussion provides a first reduction of \cref{maintheorem}. Firstly, it suffices by \cref{l1} to prove that the Hodge Euler characteristic $\chi_c^{S_\lambda}(\Jbar_{g,n}^d(\sigma))$ is independent of $d$ and $\sigma$. Secondly,  by \cref{equivariant stratification} we may compute $\chi_c^{S_\lambda}(\Jbar_{g,n}^d(\sigma))$ by adding up contributions from individual strata. In analyzing the individual strata, we use that 
 \begin{equation}\label{invariance}
     \chi_c^{S_n}(\mathcal J_{g,n}^d) = \chi_c^{S_n}(\mathcal J_{g,n}^{d'})
 \end{equation}
 for any $d$ and $d'$,  by \cref{cohomology invariance on interior}.

\subsection{Stratification of the universal stack}\label{sec:stratification of universal stack}



Recall that a prestable curve is said to be \emph{quasi-stable} if all its unstable components are disjoint, and that a line bundle on a quasi-stable curve is \emph{admissible} if its degree on each unstable component is $1$. If $C$ is a quasi-stable curve and $\mathcal L$ is an admissible line bundle on $C$, then $|\mathrm{Aut}(C,\mathcal L)/\mathbb G_m|<\infty$ if and only if the union of all stable components of $C$ is connected, and we shall assume this in what follows. Let $\mathfrak J_{g,n}$ be the highly nonseparated Deligne--Mumford stack parametrizing such pairs $(C,\mathcal L)$. Stabilization defines a natural morphism $\mathfrak J_{g,n} \to \Mbar_{g,n}$. 

In this section we will describe a stratification of $\mathfrak J_{g,n}$, analogous to the stratification of $\Mbar_{g,n}$ by topological type. In these terms, a stability condition $\sigma$ will then simply be the data of a finite collection of strata in $\mathfrak{J}_{g,n}$, satisfying a short list of combinatorial conditions which ensure that this union of strata defines an open substack $$\Jbar_{g,n}^d(\sigma) \subset \mathfrak{J}_{g,n}$$ which is proper over $\Mbar_{g,n}$. The stratification of $\mathfrak{J}_{g,n}$ is indexed by triples 
\begin{equation}
    \label{triple} (\Gamma,\Gamma_0,\underline d)
\end{equation}
where $\Gamma$ is a stable graph of type $(g,n)$, $\Gamma_0 \subseteq \Gamma$ is a {connected} subgraph containing all vertices, and $\underline d : V(\Gamma) \to \Z$ is a function. Geometrically, this stratum parametrizes quasi-stable curves whose dual graph is obtained from $\Gamma$ by subdividing each edge \emph{not} in $\Gamma_0$ once, equipped with an admissible line bundle whose degree on each stable component is specified by the function $\underline d$. It will be convenient in what follows to write $e_{\Gamma,\Gamma_0} = |E(\Gamma) \setminus E(\Gamma_0)|$ for the number of unstable components of the quasi-stable curve corresponding to $(\Gamma,\Gamma_0)$.

Let us recall something about the structure of the Jacobian of a nodal curve. Let $f:C\to S$ be a flat proper family of nodal curves, and assume that the dual graph $\gamma$ is constant in the family. Let $\nu: C^\nu \to C$ be its fiberwise normalization, and let $i:C^{\mathrm{sing}}\to C$ be the fiberwise singular locus. The exact sequence $0 \to \mathcal O_C^\times \to \nu_* \mathcal O_{C^\nu}^\times \to i_* \mathcal O_{C^{\mathrm{sing}}}^\times\to 0$ gives an exact sequence on $S$
\[ 0\to f_* \mathcal O_C^\times \to (f\circ \nu)_* \mathcal O^\times_{C^\nu} \to (f\circ i)_*\mathcal O^\times_{C^{\mathrm{sing}}} \to R^1 f_* \mathcal O_C^\times \to R^1 (f\circ \nu)_* \mathcal O^\times_{C^\nu} \to 0,\]
or more explicitly
\begin{equation}\label{exact sequence of etale sheaves} 0 \to \mathcal O_S^\times \to \bigoplus_{v \in V(\gamma)} \mathcal O_S^\times\to \bigoplus_{e \in E(\gamma)} \mathcal O_S^\times \to R^1 f_* \mathcal O_C^\times \to R^1 (f\circ \nu)_* \mathcal O^\times_{C^\nu} \to 0.\end{equation}
On the level of the group schemes representing these \'etale sheaves, this says that $\mathrm{Pic}_{C/S}$ is fibered over $\mathrm{Pic}_{C^\nu/S}$, with kernel the algebraic torus $\mathrm{Hom}(H_1(\gamma,\Z),\mathbb G_m)$, which represents the cokernel of the map $\bigoplus_{v \in V(\gamma)} \mathcal O_S^\times\to \bigoplus_{e \in E(\gamma)}\mathcal O_S^\times$.

We are now ready to describe the stratification of $\mathfrak{J}_{g,n}$ by topological type. Let $(\Gamma,\Gamma_0,\underline d)$ be a triple as in \eqref{triple}. Let $\mathrm{Aut}(\Gamma,\Gamma_0,\underline d)$ denote the subgroup of $\mathrm{Aut}(\Gamma)$ of graph automorphisms which send the subgraph $\Gamma_0$ to itself, and which preserve the function $\underline d: V(\Gamma)\to \Z$. We then have the following, which is the analogue of the familiar stratification
\[ \Mbar_{g,n} = \bigcup_{[\Gamma]} [\mathcal M_\Gamma/\mathrm{Aut}(\Gamma)]\]
of the Deligne--Mumford compactification. 

\begin{proposition}\label{stratification of universal stack}
    For each $(\Gamma,\Gamma_0,\underline d)$ as in \eqref{triple}, there is a torus bundle 
    \[ \Jcal_{\Gamma,\Gamma_0,\underline d} \longrightarrow \prod_{v \in V(\Gamma)} \Jcal_{g(v),n(v)}^{\underline d(v)}\]
    which is an $\mathrm{Aut}(\Gamma,\Gamma_0,\underline d)$-equivariant torsor for the torus $T_{\Gamma_0} := \mathrm{Hom}(H_1(\Gamma_0),\mathbb G_m) \cong \mathbb G_m^{b_1(\Gamma_0)}$. 
    The stratum in $\mathfrak J_{g,n}$ corresponding to $(\Gamma,\Gamma_0,\underline d)$ is isomorphic to $[\Jcal_{\Gamma,\Gamma_0,\underline d}/\mathrm{Aut}(\Gamma,\Gamma_0,\underline d)]$, so that set-theoretically there is a disjoint union
    \[ \mathfrak J_{g,n} = \bigcup_{[\Gamma,\Gamma_0,\underline d]} [\Jcal_{\Gamma,\Gamma_0,\underline d}/\mathrm{Aut}(\Gamma,\Gamma_0,\underline d)].\]
\end{proposition}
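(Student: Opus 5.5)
We will build $\Jcal_{\Gamma,\Gamma_0,\underline d}$ explicitly, following the construction of the stratification $\Mbar_{g,n}=\bigcup[\mathcal M_\Gamma/\Aut(\Gamma)]$, and then analyze the fiber with the exact sequence \eqref{exact sequence of etale sheaves}.

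\textbf{Step 1: the family of curves.} Start from $\mathcal M_\Gamma=\prod_{v}\mathcal M_{g(v),n(v)}$. Glue the universal curves along $\Gamma$, except that each edge $e\notin E(\Gamma_0)$ is replaced by a rational bridge $E_e\cong\mathbb P^1$ attached at its two half-edge points. This gives a family $C\to\mathcal M_\Gamma$ of quasi-stable curves with constant dual graph $\gamma$, namely $\Gamma$ with the edges outside $\Gamma_0$ subdivided once. The union of the stable components has dual graph $\Gamma_0$, which is connected. The group $\Aut(\Gamma)$, and hence its subgroup $\Aut(\Gamma,\Gamma_0,\underline d)$, acts on this family.

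\textbf{Step 2: the space $\Jcal_{\Gamma,\Gamma_0,\underline d}$.} Define $\Jcal_{\Gamma,\Gamma_0,\underline d}$ to be the rigidified relative Picard stack $\Pic_{C/\mathcal M_\Gamma}/\mathbb G_m$, restricted to the component of line bundles with degree $\underline d(v)$ on each stable component and degree $1$ on each $E_e$. Pulling back to the normalization gives a map to $\prod_v\Jcal^{\underline d(v)}_{g(v),n(v)}\times\prod_e\Pic^1(\mathbb P^1)$. The second factor is a point.

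\textbf{Step 3: the fiber is a torus torsor.} By \eqref{exact sequence of etale sheaves}, the fiber of $\Pic_{C/S}\to\Pic_{C^\nu/S}$ is the torus $\mathrm{Hom}(H_1(\gamma),\mathbb G_m)$, and the map is surjective étale-locally. Subdividing edges does not change $H_1$, so $H_1(\gamma)=H_1(\Gamma)$.

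The rigidification has to be handled carefully at this point. Automorphisms of $(C,\mathcal L)$ include the $\mathbb G_m$ scaling each bridge $E_e$, which acts on $\mathcal L|_{E_e}$ with weight $1$. These extra automorphisms reduce the gluing torus. The gluing data at the two nodes of $E_e$ can be absorbed by rescaling along $E_e$. So in the coarse sense the torus is $\mathrm{Hom}(H_1(\Gamma),\mathbb G_m)$ modulo the coordinates $\mathbb G_m^{E(\Gamma)\setminus E(\Gamma_0)}$, which is $\mathrm{Hom}(H_1(\Gamma_0),\mathbb G_m)$.

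To make this precise, I would compute the cokernel of $\bigoplus_{v\in V(\gamma)}\mathbb G_m\to\bigoplus_{e\in E(\gamma)}\mathbb G_m$, with the exceptional vertices included among the $v$. I would then note that the exceptional vertices have trivial contribution on the Picard side, which leaves the edges of $\Gamma_0$ modulo the stable vertices. Since $\Gamma_0$ is connected and contains all vertices, this is $\mathrm{Hom}(H_1(\Gamma_0),\mathbb G_m)$.

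I expect this bookkeeping to be the main obstacle: separating the automorphisms of the bridges from the gluing torus, and checking that the resulting stack has finite stabilizers exactly when $\Gamma_0$ is connected.

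\textbf{Step 4: equivariance, descent, and the disjoint union.} Everything above is functorial in graph automorphisms preserving $\Gamma_0$ and $\underline d$, so the torsor structure is $\Aut(\Gamma,\Gamma_0,\underline d)$-equivariant.

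Every pair $(C,\mathcal L)$ in $\mathfrak J_{g,n}$ determines its type uniquely. Here $\Gamma$ is the stabilized dual graph, $\Gamma_0$ is the set of edges not carrying an exceptional component, and $\underline d$ is the multidegree on the stable components. Two points of $[\Jcal_{\Gamma,\Gamma_0,\underline d}/\Aut]$ are isomorphic in $\mathfrak J_{g,n}$ exactly when they differ by a graph automorphism together with an isomorphism of the marked components. Hence the map to the locally closed substack of $\mathfrak J_{g,n}$ of that type is an isomorphism. This is checked as for $\Mbar_{g,n}$, by comparing automorphism groups and using that both sides are smooth over the corresponding boundary stratum of $\Mbar_{g,n}$. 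The set-theoretic decomposition then follows, since distinct types $(\Gamma,\Gamma_0,\underline d)$ up to isomorphism give disjoint loci.
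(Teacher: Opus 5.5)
Your overall route matches the paper's: restrict to the normalization, use the normalization exact sequence, then quotient by graph automorphisms. But there is a genuine gap in how you handle the automorphisms of the rational bridges, and as written Steps 2 and 4 do not fit the statement.

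\textbf{What goes wrong.} In Step 2 you define $\Jcal_{\Gamma,\Gamma_0,\underline d}$ as the relative Picard space of a family $C\to\Mcal_\Gamma$ whose bridges $E_e$ are held fixed. The fiber of this space over $\prod_v \Jcal^{\underline d(v)}_{g(v),n(v)}$ is a torsor under $T_\Gamma=\mathrm{Hom}(H_1(\Gamma),\mathbb G_m)$, which has rank $b_1(\Gamma_0)+e_{\Gamma,\Gamma_0}$. It is not a torsor under $T_{\Gamma_0}$. Consequently the Step 4 claim that $[\Jcal_{\Gamma,\Gamma_0,\underline d}/\Aut(\Gamma,\Gamma_0,\underline d)]$ maps isomorphically onto the stratum fails. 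For $t\in\Aut(E_e;0,\infty)\cong\mathbb G_m$, the pairs $(C,\mathcal L)$ and $(C,t^\ast\mathcal L)$ are isomorphic in $\mathfrak J_{g,n}$ but are distinct points of your space. So your map is a $\mathbb G_m^{e_{\Gamma,\Gamma_0}}$-torsor, and the dimension is too large by $e_{\Gamma,\Gamma_0}$.

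Step 3 does notice these automorphisms, but only quotients ``in the coarse sense''. The computation you propose to make this precise does not work. The $\mathbb G_m$ factors attached to the exceptional vertices in the normalization sequence are scalings of $\mathcal L|_{E_e}$ and are already part of that sequence. They only identify the two half-edge coordinates of each subdivided edge, so the cokernel is still $T_\Gamma$. The rank count confirms this: $(|E(\Gamma_0)|+2e)-(|V(\Gamma)|+e)+1=b_1(\Gamma_0)+e$. The extra $e_{\Gamma,\Gamma_0}$ directions can only be removed by automorphisms of the curve, and these are invisible in the Picard space of a fixed family.

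\textbf{The fix.} The paper builds the bridge automorphisms into the base. It works over
$\Mcal_{\Gamma^{\mathrm{qs}}}=\prod_{v\in V(\Gamma^{\mathrm{qs}})}\Mcal_{g(v),n(v)}\cong\Mcal_\Gamma\times(B\mathbb G_m)^{e_{\Gamma,\Gamma_0}}$
and takes the multidegree-$\underline d$ relative Jacobian of the universal quasi-stable curve there. Equivalently, one can use $[\Pic^{\underline d}_{C/\Mcal_\Gamma}/\mathbb G_m^{e_{\Gamma,\Gamma_0}}]$. The resulting fibers are:
\begin{itemize}
\item Restriction to the normalization has fiber $T_{\Gamma^{\mathrm{qs}}}=T_\Gamma$.
\item Forgetting the exceptional components has fiber $(B\mathbb G_m)^{e_{\Gamma,\Gamma_0}}$, since $\Jcal^1_{0,2}\cong\Mcal_{0,2}\cong B\mathbb G_m$.
\item The composite therefore has fiber $T_\Gamma/\mathbb G_m^{e_{\Gamma,\Gamma_0}}$.
\end{itemize}

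To identify this quotient with $T_{\Gamma_0}$, you need the argument you only sketched. An automorphism $t$ of $E_e$ changes the gluing parameter of $e$ by $t^{\pm\deg\mathcal L|_{E_e}}=t^{\pm1}$. Hence $\mathbb G_m^{e_{\Gamma,\Gamma_0}}$ acts through the map dual to $H_1(\Gamma)\to\Z^{E(\Gamma)\setminus E(\Gamma_0)}$. That map is surjective with kernel $H_1(\Gamma_0)$ because $\Gamma_0$ is connected and spanning. So the action is free with quotient $T_{\Gamma_0}$. This is exactly where admissibility (degree $1$ on bridges) enters: degree $k$ would leave $\mu_k$ stabilizers.

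With this corrected definition of $\Jcal_{\Gamma,\Gamma_0,\underline d}$, your Step 4 goes through as you describe.
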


\begin{proof}
    	Let $\Gamma^{\mathrm{qs}}$ be the graph obtained from $\Gamma$ by subdividing each edge not in $\Gamma_0$ once. Set 
		\[ \ \mathcal M_{\Gamma^{\mathrm{qs}}} = \prod_{v \in V(\Gamma^{\mathrm{qs}})} \mathcal M_{g(v),n(v)} \cong \mathcal M_\Gamma \times (B\mathbb G_m)^{e_{\Gamma,\Gamma_0}}.\]
		Then  $\mathcal M_{\Gamma^{\mathrm{qs}}}$ is the moduli space of quasi-stable  curves equipped with an isomorphism of their dual graph with $\Gamma^{\mathrm{qs}}$. We extend the function $\underline d$ to $V(\Gamma^{\mathrm{qs}})$ by setting $\underline d(v)=1$ for all $v \in V(\Gamma^{\mathrm{qs}})\setminus V(\Gamma)$. Let $\mathcal C \to \mathcal M_{\Gamma^{\mathrm{qs}}}$ be the universal quasi-stable curve. We let $\Jcal_{\Gamma,\Gamma_0,\underline d} $ be the component of the relative Jacobian of $\mathcal C$ over $\mathcal M_{\Gamma^{\mathrm{qs}}}$, parametrizing admissible line bundles of multidegree $\underline d$. 	There are now natural maps
		\[ \Jcal_{\Gamma,\Gamma_0,\underline d} \stackrel f \longrightarrow \prod_{v \in V(\Gamma^{\mathrm{qs}})} \Jcal_{g(v),n(v)}^{\underline d(v)} \stackrel g \longrightarrow \prod_{v \in V(\Gamma)} \Jcal_{g(v),n(v)}^{\underline d(v)} \]
		given by restricting a line bundle on the quasi-stable curve to its normalization, and forgetting the unstable components, respectively. The map $f$ is a torus bundle with fiber $\mathrm{Hom}(H_1(\Gamma^{\mathrm{qs}},\Z),\mathbb G_m)$, as we saw from \eqref{exact sequence of etale sheaves}. The map $g$ is a trivial bundle with fiber $(B\mathbb G_m)^{e_{\Gamma,\Gamma_0}}$, since $\mathcal J_{0,2}^d \cong\mathcal M_{0,2} \cong B\mathbb G_m$ for all $d$.  Consequently, the fiber of $(g\circ f)$ is the quotient torus
		\[ \mathrm{Hom}(H_1(\Gamma^{\mathrm{qs}},\Z),\mathbb G_m)/\mathbb G_m^{e_{\Gamma,\Gamma_0}}.  \]
		The naturality of the sequence \eqref{exact sequence of etale sheaves} shows that $\mathbb G_m^{e_{\Gamma,\Gamma_0}}$ acts in the evident way on $\bigoplus_{v \in V(\Gamma^{\mathrm{qs}})} \mathbb G_m$ and $\bigoplus_{e \in E(\Gamma^{\mathrm{qs}})}\mathbb G_m$, which allows us to identify the quotient torus with $\mathrm{Hom}(H_1(\Gamma_0,\Z),\mathbb G_m)$. Since $\mathcal M_{\Gamma^{\mathrm{qs}}}$ parametrizes quasi-stable curves together with an identification of their dual graph with $\Gamma^{\mathrm{qs}}$, the corresponding locus in the moduli stack of prestable curves is obtained by forgetting this identification, giving the quotient stack $[\mathcal M_{\Gamma^{\mathrm{qs}}}/\mathrm{Aut}(\Gamma,\Gamma_0)]$. Similarly the locus in $\mathfrak J_{g,n}$ corresponding to $\Jcal_{\Gamma,\Gamma_0,\underline d}$ is $[\Jcal_{\Gamma,\Gamma_0,\underline d} /\mathrm{Aut}(\Gamma,\Gamma_0,\underline d)]$. \end{proof}

\begin{remark}Let us make explicit the notion of ``equivariant torsor'' which appears in the statement of \cref{stratification of universal stack}. Suppose $G$ and $H$ are groups, that $G$ acts on a space $X$, and $G$ acts on the group $H$. A \emph{$G$-equivariant $H$-torsor over $X$} is a lifting in the following diagram:
	\[ \begin{tikzcd}
		& B(H \rtimes G) \arrow[d]\\
		\left[X/G\right] \arrow[r]\arrow[ur,dashed]& BG.
	\end{tikzcd}\]
In particular, a $G$-equivariant $H$-torsor has an underlying $H$-torsor $E \to X$, with a $G$-action on $E$ making the projection equivariant, such that the actions of $G$ and $H$ on $E$ combine to an action of the semidirect product $H \rtimes G$. If $G$ acts trivially on $H$, this reduces to the notion of an $H$-torsor on the quotient stack $[X/G]$. 
\end{remark}

\subsection{Pagani--Tommasi stability conditions}

Given a stable graph $\Gamma$, we denote by
\begin{equation}
    \Pic(\Gamma) = \mathbb{Z}^{V(\Gamma)}/\mathrm{Twist}_\Gamma
\end{equation}
the \emph{Picard group of $\Gamma$}. Here $\mathrm{Twist}_\Gamma$ is the group of multidegrees obtained by twisting by some vertex of $\Gamma$, see \cite{MR2355607} for further details. We have that $\Pic(\Gamma)=\coprod_{d \in \mathbb{Z}} \Pic^d(\Gamma)$ decomposes into fibers according to the total degree of a divisor on $\Gamma$ (i.e. the quantity $d = \sum_{v \in V(\Gamma)} \underline{d}(v)$), and each $\Pic^d(\Gamma)$ is a torsor under the finite abelian group $\Pic^0(\Gamma)$.

We will not need to use the full definition of a {Pagani--Tommasi stability condition}. The only thing we shall need to know is that a Pagani--Tommasi stability condition $\sigma$ over $\Mbar_{g,n}$ of degree $d$ consists of a collection of triples $(\Gamma,\Gamma_0,\underline d)$ as in \eqref{triple}, which in particular must satisfy the following conditions:
\begin{enumerate}[(i)]
	\item \label{condition1} For every pair $(\Gamma,\Gamma_0)$, the set 
	\[\{ \underline d  : (\Gamma,\Gamma_0,\underline d) \in \sigma\}\]
	is a minimal set of coset representatives for $\operatorname{Pic}^{d-e_{\Gamma,\Gamma_0}}(\Gamma_0)$. 
	\item \label{equivariant} If $f: \Gamma' \to \Gamma$ is an isomorphism of stable graphs, then 
	\[ (\Gamma,\Gamma_0,\underline d) \in \sigma \iff (\Gamma',f^{-1}\Gamma_0,\underline d \circ f) \in \sigma.\]
\end{enumerate}

The full definition of a Pagani--Tommasi stability condition is not much more complicated than this: it involves also a compatibility condition under the operation of growing $\Gamma_0$ (while keeping $\Gamma$ fixed), and a compatibility condition under the operation of contracting a subgraph of $\Gamma$. These conditions will however play no role in our arguments. For such a Pagani--Tommasi stability condition $\sigma$ we have 
\begin{equation}\label{pagani tommasi stratification}
    \Jbar_{g,n}^d(\sigma) = \bigcup_{[\Gamma,\Gamma_0,\underline d]\in \sigma} [\Jcal_{\Gamma,\Gamma_0,\underline d} /\mathrm{Aut}(\Gamma,\Gamma_0,\underline d)] \subset \mathfrak J_{g,n}.
\end{equation} 



We claim that at this point, the main result of the paper has been reduced to the following combinatorial claim. 

\begin{lemma}
    \label{combinatorial claim}Let $\sigma$ and $\sigma'$ be Pagani--Tommasi stability conditions of degrees $d$ and $d'$, for fine compactified universal Jacobians over $\Mbar_{g,n}$. Assume moreover that $\sigma$ and $\sigma'$ are invariant under a subgroup $S_\lambda = S_{n_1} \times \dots \times S_{n_p} \subseteq S_n$. Let $\Gamma$ be a stable graph of type $(g,n)$ with $n_k$ legs labeled by $k$, for all $k=1,\dots,p$ (so some legs may be indistinguishable), and let $\Gamma_0 \subset \Gamma$ be a connected  subgraph containing all vertices. There exists an $\mathrm{Aut}(\Gamma,\Gamma_0)$-equivariant bijection between the sets
    \[ \{\underline d : (\Gamma,\Gamma_0,\underline d) \in \sigma\} \qquad \text{and} \qquad \{\underline d' : (\Gamma,\Gamma_0,\underline d') \in \sigma'\}.\]
    (The assumption that $\sigma$ and $\sigma'$ are $S_\lambda$-invariant implies in particular that $\mathrm{Aut}(\Gamma,\Gamma_0)$ acts on both sets.) 
\end{lemma}

Before proceeding to the proof of \cref{combinatorial claim}, let us explain how it implies \cref{maintheorem}. 

\begin{proof}[Proof of \cref{maintheorem}]Consider first the non-equivariant situation; that is, $S_\lambda$ is trivial. It was observed in Section \ref{sec: hodge euler characteristics} that it suffices to show that $\chi_c(\Jbar_{g,n}^d(\sigma))$ is independent of $d$ and $\sigma$.
    By \eqref{pagani tommasi stratification} and \cref{stratification of universal stack} we have
    \[ \chi_c(\Jbar_{g,n}^d(\sigma)) = \sum_{[\Gamma,\Gamma_0,\underline d] \in \sigma} \chi_c\big( \Jcal_{\Gamma,\Gamma_0,\underline d} / \mathrm{Aut}(\Gamma,\Gamma_0,\underline d) \big).\]
    Consider our fixed Pagani--Tommasi stability conditions $\sigma$ and $\sigma'$. For each pair $(\Gamma,\Gamma_0)$, let $\phi : \underline d \mapsto \underline d'$ be the bijection of \cref{combinatorial claim}. It suffices to show that 
    \begin{equation}\label{conclusion}
        \chi_c\big( \Jcal_{\Gamma,\Gamma_0,\underline d} / \mathrm{Aut}(\Gamma,\Gamma_0,\underline d) \big) = \chi_c\big( \Jcal_{\Gamma,\Gamma_0,\phi(\underline d)} / \mathrm{Aut}(\Gamma,\Gamma_0,\phi(\underline d)) \big)
    \end{equation} 
   for all $\underline d$ with $(\Gamma,\Gamma_0,\underline d) \in \sigma$. We consider the Leray--Serre spectral sequence of the torus bundles
    \[ E_2^{pq} = H^p_c(\prod_{v \in V(\Gamma)} \Jcal_{g(v),n(v)}^{\underline d(v)},\Q) \otimes H^q_c(T_{\Gamma_0},\Q) \implies H^{p+q}_c(\Jcal_{\Gamma,\Gamma_0,\underline d},\Q)\]
    and
    \[ E_2^{pq} = H^p_c(\prod_{v \in V(\Gamma)} \Jcal_{g(v),n(v)}^{\phi(\underline d)(v)},\Q) \otimes H^q_c(T_{\Gamma_0},\Q) \implies H^{p+q}_c(\Jcal_{\Gamma,\Gamma_0,\phi(\underline d)},\Q).\]
   Note that in both cases, the fundamental group of the base acts trivially on the cohomology of the fiber. This is because the monodromy group acts through the structure group of the torsor, which is the group $T_{\Gamma_0}$ itself, and a connected group acts trivially on its own cohomology.     The $E_2$-pages are isomorphic by \cref{cohomology invariance on interior}. The groups $\mathrm{Aut}(\Gamma,\Gamma_0,\underline d)$ resp.~$\mathrm{Aut}(\Gamma,\Gamma_0,\phi(\underline d))$ act naturally on the $E_2$-pages and on the abutments. The equivariance of $\phi$ with respect to graph automorphisms implies that $\mathrm{Aut}(\Gamma,\Gamma_0,\underline d) \to \mathrm{Aut}(\Gamma,\Gamma_0,\phi(\underline d))$ is an isomorphism, and that the action on the $E_2$-pages is identified. We use these spectral sequences only at the level of Euler characteristics: the class of the abutment in $K_0(\mathsf{MHS}^{\mathrm{Aut}(\Gamma,\Gamma_0,\underline d)})$ equals the class of any page of the spectral sequence. Since taking invariants is exact over $\mathbb Q$, the equality of the $E_2$-classes gives \eqref{conclusion}.

	Let us now consider the case that $S_\lambda$ is not trivial. We apply \cref{equivariant stratification}, and consider the action of $S_\lambda$ on the stratification by topological type. For a triple $(\Gamma,\Gamma_0,\underline d)$ as above describing a stratum in $\Jbar_{g,n}^d(\sigma)$, let us define 
	\[ \mathrm{Aut}^\lambda(\Gamma,\Gamma_0,\underline d)\]
	as the group of automorphisms of the same graph, except some of the legs are instead treated as indistinguishable: we have $n_k$ legs labelled by $k$, for all $k=1,\dots,p$. There is a natural map $\mathrm{Aut}^\lambda(\Gamma,\Gamma_0,\underline d) \to S_\lambda$, and the usual automorphism group $\mathrm{Aut}(\Gamma,\Gamma_0,\underline d)$ is its kernel. There is then  a short exact sequence
	\[ 1 \to {\mathrm{Aut}}(\Gamma,\Gamma_0,\underline d) \to {\mathrm{Aut}}^\lambda(\Gamma,\Gamma_0,\underline d) \to \mathrm{Stab}_{(\Gamma,\Gamma_0,\underline{d})} \to 1,\]
	where $\mathrm{Stab}_{(\Gamma,\Gamma_0,\underline{d})}$ denotes the subgroup of $S_\lambda$ which takes the stratum corresponding to $(\Gamma,\Gamma_0,\underline d)$ to itself. Now \cref{equivariant stratification} tells us that
	\[ \chi_c^{S_\lambda}(\Jbar_{g,n}^d(\sigma)) = \sum_{[\Gamma,\Gamma_0,\underline d] \,/\,S_\lambda} \operatorname{Ind}_{\mathrm{Stab}(\Gamma,\Gamma_0,\underline d)}^{S_\lambda} \chi_c^{\mathrm{Stab}(\Gamma,\Gamma_0,\underline d)} (\Jcal_{\Gamma,\Gamma_0,\underline d} / {\mathrm{Aut}}(\Gamma,\Gamma_0,\underline d))\]
	with the summation running over $S_\lambda$-orbits of strata, or equivalently, isomorphism classes of graphs with some legs indistinguishable. The group ${\mathrm{Stab}(\Gamma,\Gamma_0,\underline d)}$ acts on $\Jcal_{\Gamma,\Gamma_0,\underline d} / {\mathrm{Aut}}(\Gamma,\Gamma_0,\underline d)$ since ${\mathrm{Aut}}^\lambda(\Gamma,\Gamma_0,\underline d)$ acts on $\Jcal_{\Gamma,\Gamma_0,\underline d}$.   At this point one proceeds as in the non-equivariant case, noting that \cref{combinatorial claim} also gives us an identification $\mathrm{Aut}^\lambda(\Gamma,\Gamma_0,\underline d) \to \mathrm{Aut}^\lambda(\Gamma,\Gamma_0,\phi(\underline d))$.	\end{proof}

\subsection{Proof of the combinatorial claim}

We begin with the following lemma.

\begin{lemma} \label{Lem:dmultiplication}
Fix a stable graph $\Gamma$ of genus $g$ with $n_1+\dots+n_p$ legs, where each set of $n_k$ legs are pairwise indistinguishable. Consider the set
\begin{equation} \label{eqn:Dset}
    D = \{d \in \mathbb{Z} : \gcd(d-g+1, 2g-2,n_1,\dots,n_p)=1\} \subseteq \mathbb{Z}\,.
\end{equation}
Then for $d_1, d_2 \in D$ there exists an $\Aut(\Gamma)$-equivariant bijection
\[
\phi: \Pic^{d_1}(\Gamma) \to \Pic^{d_2}(\Gamma)\,.
\]
\end{lemma}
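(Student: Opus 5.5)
The plan is to build $\phi$ as an affine map
\[
D \longmapsto aD + \kappa,
\]
where $a$ is an integer and $\kappa$ is an $\Aut(\Gamma)$-invariant divisor class. Write $V=V(\Gamma)$ and let $\Aut(\Gamma)$ act on $\Z^{V}$ by permuting vertices. Here $\Aut(\Gamma)$ means automorphisms allowed to permute legs within each label class $k$.

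\textbf{Step 1: the map is well defined and equivariant.} The twist subgroup $\mathrm{Twist}_\Gamma$ is $\Aut(\Gamma)$-stable, so $\Aut(\Gamma)$ acts on $\Pic(\Gamma)$ by group automorphisms. For any integer $a$, multiplication by $a$ on $\Z^V$ preserves $\mathrm{Twist}_\Gamma$, so it descends to an equivariant endomorphism of $\Pic(\Gamma)$ sending $\Pic^{d}(\Gamma)$ into $\Pic^{ad}(\Gamma)$. The following classes are $\Aut(\Gamma)$-invariant:
\begin{itemize}
\item the canonical class $K$, with $K(v)=2g(v)-2+\mathrm{val}(v)$ and degree $2g-2$;
\item for each $k$, the class $L_k$, with $L_k(v)$ equal to the number of legs at $v$ labelled $k$, and degree $n_k$.
\end{itemize}
So for integers $a,b,c_1,\dots,c_p$ the map
\[
D\mapsto aD+bK+\sum_k c_kL_k
\]
is $\Aut(\Gamma)$-equivariant. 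It sends $\Pic^{d_1}(\Gamma)$ to $\Pic^{ad_1+b(2g-2)+\sum c_kn_k}(\Gamma)$.

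\textbf{Step 2: bijectivity.} Both sets are torsors under the finite group $\Pic^0(\Gamma)$, of order $N$ say, so they have the same cardinality. Two classes $D,D'\in\Pic^{d_1}(\Gamma)$ with the same image satisfy $a(D-D')=0$ in $\Pic^0(\Gamma)$. Hence the map is injective, and therefore bijective, as soon as $\gcd(a,N)=1$.

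\textbf{Step 3: arithmetic (the main step).} Set $G=\gcd(2g-2,n_1,\dots,n_p)$ and $e_i=d_i-g+1$. By hypothesis $e_1,e_2$ are units mod $G$. We need $a$ coprime to $N$ and integers $b,c_k$ with
\[
ad_1+b(2g-2)+\sum_k c_kn_k=d_2 .
\]
Since the integer combinations of $2g-2,n_1,\dots,n_p$ are exactly $G\Z$, this amounts to
\[
ad_1\equiv d_2 \pmod G .
\]
The natural route goes through the $e_i$. Suppose $a$ is odd. Then $(1-a)(g-1)=\tfrac{1-a}{2}(2g-2)$ is a multiple of $2g-2$, and
\[
ad_1+\tfrac{1-a}{2}K\in\Pic^{ae_1+g-1}(\Gamma).
\]
So it suffices to choose $a$ odd, coprime to $N$, with $ae_1\equiv e_2\pmod G$.

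\textbf{Choosing $a$ by Dirichlet.} Let $r\equiv e_2e_1^{-1}\pmod G$, which is a unit mod $G$.
\begin{itemize}
\item If $G$ is even, $r$ is odd, and every prime $\equiv r \pmod G$ is odd.
\item If $G$ is odd, use CRT to impose $a\equiv r\pmod G$ and $a\equiv 1\pmod 2$; this is a unit class mod $2G$.
\end{itemize}
In either case Dirichlet's theorem gives a prime $a$ in this class with $a>N$, hence coprime to $N$.

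\textbf{Conclusion.} With this $a$, the difference $e_2-ae_1$ lies in $G\Z$. Write it as $b(2g-2)+\sum_kc_kn_k$. Then
\[
\phi(D)=aD+\Big(\tfrac{1-a}{2}+b\Big)K+\sum_kc_kL_k
\]
is the required equivariant bijection $\Pic^{d_1}(\Gamma)\to\Pic^{d_2}(\Gamma)$.
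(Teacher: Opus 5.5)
Your proof is correct and follows essentially the same route as the paper. Both use an odd Dirichlet prime $a>|\Pic^0(\Gamma)|$ in the right residue class modulo $\gcd(2g-2,n_1,\dots,n_p)$, and the correction by $\tfrac{1-a}{2}K$ that turns scaling of $d$ into scaling of $d-g+1$. Both then use Bézout translations by the invariant classes $K$ and $L_k$; you simply package the paper's two operations (translation and multiplication) into one affine map.
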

\begin{proof}Let $M= \gcd(2g-2,n_1,\dots,n_p)$.
Introducing the coordinate change $d'=d'(d)=d-g+1$ we observe that\footnote{The move of shifting by $-g+1$ was inspired by \href{https://chatgpt.com/share/699d7008-dc68-8006-b12b-cefc9263ed29}{a conversation with o3-mini high}, though the full strategy proposed by the model did not work as described.}
\[
D' = \{d'(d): d \in D\} = \{d' \in \mathbb{Z}: [d'] \in (\mathbb{Z}/M \mathbb{Z})^\times\}\,.
\]
In our argument below we construct two types of $\Aut(\Gamma)$-equivariant bijections relating degrees $d_1', d_2'$ of the forms
\begin{itemize}
    \item[a)] $d_2' = d_1' + k M$ for arbitrary $k \in \mathbb{Z}$,
    \item[b)] $d_2' = a \cdot d_1'$ for a set of representatives $a \in \mathbb{Z}$ of all elements $\overline a \in (\mathbb{Z}/M \mathbb{Z})^\times$.
\end{itemize}
Since $(\mathbb{Z}/M \mathbb{Z})^\times$ acts on itself transitively, these two operations can pairwise relate any two elements of the set $D'$ above. This proves the claim of the Lemma.

For our operations below, consider the multidegrees
\begin{align*}
    \underline{d}^\textup{can} &= (2g(v)-2+\mathrm{val}(v))_{v \in V(\Gamma)} \in \Pic^{2g-2}(\Gamma)\,,&\\
    \underline{d}^{(k)} &= (n_k(v))_{v \in V(\Gamma)} \in \Pic^{n_k}(\Gamma)& k=1,\dots,p
\end{align*}where $\mathrm{val}(v)$ is the number of (non-leg) half-edges at $v$ and $n_k(v)$ is the number of legs labeled ``$k$'' at $v$. Both of them are clearly $\Aut(\Gamma)$-invariant. 

For operation a) we simply observe that, by B\'ezout's identity a suitable linear combination of translations in $\mathrm{Pic}(\Gamma)$ by $\underline{d}^\textup{can}$ and $\underline{d}^{(k)}$, $k=1,\dots,p$, 
has the desired property. 

For operation b) consider an arbitrary element of $(\mathbb{Z}/M \mathbb{Z})^\times$ represented by a number $0 \leq \overline a \leq M$ with $\gcd(\overline a, M)=1$. By Dirichlet's theorem about primes in arithmetic progressions, the sequence $\overline a + m M$ for $m \in \mathbb{Z}_{\geq 0}$ contains infinitely many prime numbers. Choose such an odd prime number $a$ which is larger than the cardinality $|\Pic^0(\Gamma)|$ of the Jacobian of the graph $\Gamma$. Then we claim that the componentwise multiplication
\[
\phi_a : \Pic^{d_1}(\Gamma) \to \Pic^{a \cdot d_1}(\Gamma), \quad \underline{d} \mapsto a \cdot \underline{d} = (a \cdot \underline{d}(v))_{v \in V(\Gamma)}
\]
defines an $\Aut(\Gamma)$-equivariant bijection. First, it is well defined since it sends twist multidegrees to themselves and thus descends to the Picard group of $\Gamma$. It is also clearly equivariant with respect to graph automorphisms. For the bijectivity, choose an arbitrary element $\underline{d}_0 \in \Pic^{d_1}(\Gamma)$. Then by the classical theory of divisors on graphs, the map
\[
t_{\underline{d}_0} : \Pic^0(\Gamma) \to \Pic^{d_1}(\Gamma), \quad \underline{d} \mapsto \underline{d} + \underline{d}_0
\]
is a bijection. But the composition $\phi_a \circ t_{\underline{d}_0}$ is clearly equal to the map $t_{a \cdot \underline{d}_0} \circ m_a$, where
\[
m_a : \Pic^0(\Gamma) \to \Pic^0(\Gamma), \quad\underline{d} \mapsto a \cdot \underline{d}
\]
is the multiplication by $a$. But by assumption, the number $a$ is prime and larger than the cardinality of the finite abelian group $\Pic^0(\Gamma)$. Thus $m_a$ is bijective, finishing the proof that $\phi_a$ is likewise bijective.

The above operation $\phi_a$ relates $d_1$ to the degree $d_2=a \cdot d_1$. Expressed in coordinates $d'$ this means:
\[
d_2 = a \cdot d_1 \implies d_2' = d_2 - g+1 = a d_1 - g+1 = a (d_1' + g -1) -g+1 = a d_1' + (a-1) (g-1)
\]
To obtain the desired operation b) relating $d_1'$ to $d_2' = a \cdot d_1'$ we can simply compose $\phi_a$ by the translation operation with $-(a-1)/2 \cdot \underline{d}^\textup{can}$, noting that the prime number $a$ is odd.
\end{proof}

We can now prove \cref{combinatorial claim}.

\begin{proof}[Proof of \cref{combinatorial claim}]
In the case $g=0$, both sets which are claimed to admit an equivariant bijection are singleton sets (because in $g=0$ there is a unique stable multidegree for every nondegenerate stability condition). Thus the statement is trivial in this case, and in the rest of the proof we may assume $g \geq 1$.

By condition \eqref{condition1} of a Pagani--Tommasi stability condition, the maps
\[ \{ \underline d : (\Gamma,\Gamma_0,\underline d) \in \sigma \} \longrightarrow \mathrm{Pic}^{d-e_{\Gamma,\Gamma_0}}(\Gamma_0) \]
and 
\[ \{ \underline d' : (\Gamma,\Gamma_0,\underline d') \in \sigma'\} \longrightarrow \mathrm{Pic}^{d'-e_{\Gamma,\Gamma_0}}(\Gamma_0) \]
are both bijections, for every pair $(\Gamma,\Gamma_0)$. They are also equivariant with respect to graph automorphisms, where $\mathrm{Aut}(\Gamma,\Gamma_0)$ acts on the target via the projection to $\mathrm{Aut}(\Gamma_0)$. Thus it suffices to show that there exists an $\mathrm{Aut}(\Gamma,\Gamma_0)$-equivariant bijection between $\mathrm{Pic}^{d-e_{\Gamma,\Gamma_0}}(\Gamma_0)$ and $\mathrm{Pic}^{d'-e_{\Gamma,\Gamma_0}}(\Gamma_0)$. 

We claim that this follows from \cref{Lem:dmultiplication}. Let $\Gamma_0'$ denote the stable graph with $n+2e_{\Gamma,\Gamma_0}$ legs obtained from $\Gamma_0$ by ``cutting'' all the edges of $\Gamma$ not in $\Gamma_0$. Since $\Gamma_0'$ is just obtained from $\Gamma_0$ by attaching further legs, we have $\mathrm{Pic}(\Gamma_0)=\mathrm{Pic}(\Gamma_0')$, naturally with respect to the action of the group $\mathrm{Aut}(\Gamma,\Gamma_0)$. Hence it suffices to show that there is an $\mathrm{Aut}(\Gamma_0')$-equivariant bijection between $\mathrm{Pic}^{d-e_{\Gamma,\Gamma_0}}(\Gamma_0')$ and $\mathrm{Pic}^{d'-e_{\Gamma,\Gamma_0}}(\Gamma_0')$. We consider $\Gamma_0'$ as having groups of $n_1,\dots,n_p,2e_{\Gamma,\Gamma_0}$ indistinguishable legs.

We assumed that $\sigma$ and $\sigma'$ were invariant under $S_\lambda$. According to \cite[Remark 7.7]{fpv}, such invariant Pagani--Tommasi stability conditions exist only for $d$ and $d'$ satisfying the condition
 \[ \gcd(g-1+d,2g-2,n_1,\dots,n_p)=1.\]
 The condition we need to verify, to apply \cref{Lem:dmultiplication}, is that 
 \[ \gcd(g(\Gamma_0')-1+(d-e_{\Gamma,\Gamma_0}),2g(\Gamma_0')-2,n_1,\dots,n_p,2e_{\Gamma,\Gamma_0})=1.\]
 The result follows once we note that $g(\Gamma_0') = g - e_{\Gamma,\Gamma_0}$. 
\end{proof}


\subsection{Computation} \label{compute}
	The results of this paper do not only prove that the cohomology groups $H^*(\Jbar_{g,n}^d(\sigma),\Q)$ are independent of $d$ and $\sigma$, they also provide an algorithm for computing these cohomology groups, assuming that we know all of the equivariant Euler characteristics $\chi_c^{S_{n'}}(\Jcal_{g',n'})$ of the uncompactified moduli spaces, with $g' \leq g$, and $2g' - 2 + n' \leq 2g-2+n$. 
	

    \vspace{6pt}
    \noindent $\bullet$ How does one compute the Hodge numbers of $\Jbar_{g,n}^d(\sigma)$?
    \vspace{6pt}
    
    What we need to do is to enumerate all isomorphism classes of pairs $(\Gamma,\Gamma_0)$, where $\Gamma$ is a stable graph of type $(g,n)$, and $\Gamma_0$ is a connected subgraph containing all vertices, and for each such pair calculate the automorphism group $\Aut(\Gamma,\Gamma_0)$ and the Picard torsor $\mathrm{Pic}^{d-e_{\Gamma,\Gamma_0}}(\Gamma_0)$. For each pair $(\Gamma,\Gamma_0)$, consider the union of the corresponding strata in $\Jbar_{g,n}^d(\sigma)$. The compactly supported cohomology of this union has the same class in $K_0(\mathsf{MHS})$ as
    \[ \Big( H^*_c(T_{\Gamma_0},\Q) \otimes \Q[\mathrm{Pic}^{d-e_{\Gamma,\Gamma_0}}(\Gamma_0) ] \otimes \bigotimes_{v \in V(\Gamma)} H^*_c(\Jcal_{g(v),n(v)},\Q)  \Big)^{\Aut(\Gamma,\Gamma_0)}.\]
	Here $\Q[\mathrm{Pic}^{d-e_{\Gamma,\Gamma_0}}(\Gamma_0) ]$ denotes the $\Q$-vector space spanned by the finite set $\mathrm{Pic}^{d-e_{\Gamma,\Gamma_0}}(\Gamma_0)$, thought of as a trivial Hodge structure of weight $0$, but with a nontrivial action of the group $\Aut(\Gamma,\Gamma_0)$. From this, we can compute the Hodge--Deligne polynomial of each such union of strata in terms of the $S_{n(v)}$-equivariant Hodge--Deligne polynomials of the factors $\Jcal_{g(v),n(v)}$; we need the equivariance to know how $\Aut(\Gamma,\Gamma_0)$ acts. 

    It would be interesting if this procedure could be carried out systematically in terms of symmetric functions, akin to Getzler--Kapranov's \cite[Theorem 8.13]{getzlerkapranov} (or the approach of Kannan--Song \cite{kannansong}), which solves the analogous problem when there is no $\Gamma_0$, and the factors $H^*_c(T_{\Gamma_0},\Q) \otimes \Q[\mathrm{Pic}^{d-e_{\Gamma,\Gamma_0}}(\Gamma_0) ]$ are omitted.

    \vspace{6pt}
    \noindent $\bullet$ How does one compute $\chi_c^{S_{n}}(\Jcal_{g,n})$?
    \vspace{6pt}

     Suppose first $g \geq 2$. The forgetful map $\Jcal_{g,n} \to \Mcal_g$ is a fiber bundle with fiber $F(X,n) \times J(X)$, where $X$ is a fixed curve of genus $g$, and $F(X,n)$ denotes the configuration space of $n$ distinct ordered points on $X$. We may think of the cohomologies $H^*_c(F(X,n),\Q)$ and $H^*_c(J(X),\Q)$ as variations of mixed Hodge structure on $\Mcal_g$. Let us write $\chi_{\Mcal_g, c}^{S_n}(F(X,n))$ for the corresponding Euler characteristic, now taken in the Grothendieck group of polarizable variations of Hodge structure over $\Mcal_g$ with an action of $S_n$. Getzler (see \cite[Theorem 4.5]{resolving} or \cite{mixedhodge}) has given an explicit formula for $\chi_{\Mcal_g, c}^{S_n}(F(X,n))$ in terms of standard variations of Hodge structure $\mathbb V_\lambda$ on $\Mcal_g$ associated to irreducible representations of the symplectic group, indexed by partitions. This formula for $\chi_{\Mcal_g, c}^{S_n}(F(X,n))$ includes precisely those $\mathbb V_\lambda$ with $|\lambda| \leq n$. The Euler characteristic $\chi_{\Mcal_g,c}(J(X))$ is straightforwardly $\sum_k (-1)^k [\bigwedge^k \mathbb V_1]$. Multiplying these two together gives an expression for 
	\[ \chi_{\Mcal_g, c}^{S_n}(F(X,n) \times J(X))\]
in terms of classes of local systems $\mathbb V_\mu$ with $|\mu| - \ell(\mu) \leq n$. (One can show that these $\mu$ are precisely the representations of the symplectic group which occur as a summand in $\mathbb V_\lambda \otimes \bigwedge^k \mathbb V_1$ for some $\lambda$ with $|\lambda| \leq n$ and some $k$.) We then obtain a formula for $ \chi_c^{S_n}(\Jcal_{g,n})$
by ``integrating over $\Mcal_g$'', i.e.~substituting each occurrence of $\mathbb V_\lambda$ in $\chi_{\Mcal_g, c}^{S_n}(F(X,n) \times J(X))$ for the Euler characteristic of its cohomology, $\chi_c(\Mcal_g,\mathbb V_\lambda)$. In particular, $\chi_c^{S_n}(\Jcal_{g,n})$ can be expressed as a sum of terms $\chi_c(\Mcal_g,\mathbb V_\lambda)$ with $|\lambda| - \ell(\lambda) \leq n$.

The situation when $g=1$ is similar and even a little simpler: the forgetful map $\mathcal J_{g,n} \to \mathcal M_{1,1}$ has fiber $F(E,n)$, where $E$ is an elliptic curve, and one similarly expresses $\chi_c^{S_n}(\mathcal J_{g,n})$ in terms of $\chi_c(\Mcal_{1,1},\mathbb V_\lambda)$. When $g=0$, $\mathcal J_{0,n}= \mathcal M_{0,n}$, whose cohomology was calculated $S_n$-equivariantly by Getzler \cite{genuszero}.

The cohomology groups $H^*_c(\mathcal M_{1,1},\mathbb V_\lambda)$ are calculated classically by Eichler and Shimura. See Zucker \cite{zuckerdegeneratingcoefficients} or \cite[Theorem 5.3]{resolving} for the mixed Hodge structure. In genus two, a formula for $\chi_c(\mathcal A_2,\mathbb V_\lambda)$ was conjectured in \cite{fvdg1} and proven in \cite{localsystemsA2}, where in fact $H^*_c(\mathcal A_{2},\mathbb V_\lambda)$ is calculated (as a real Hodge structure, or $\ell$-adic Galois representation). Using the branching rule of \cite[Proposition 3.4]{d-elliptic} one can then compute $\chi_c(\Mcal_2,\mathbb V_\lambda)$ for any $\lambda$. In genus $3$, $\chi_c(\mathcal M_3,\mathbb V_\lambda)$ is known in the Grothendieck group of Galois representations unconditionally for $|\lambda| \leq 7$ \cite{Bergstrom_genus_three}, and for $|\lambda| \leq 14$ conditionally on standard conjectures in the Langlands program \cite{chenevierlannes}. When $|\lambda|$ is even, one can calculate $\chi_c(\mathcal M_3,\mathbb V_\lambda)$ in terms of $\chi_c(\mathcal A_3,\mathbb V_\lambda)$. A formula for the latter was conjectured in \cite{bfvdg} and proven in \cite{taibi}. In genus $4$, $\chi_c(\Mcal_4,\mathbb V_\lambda)$ is known for $|\lambda| \leq 3$ \cite{bfp}, which, however, is not enough information to determine even $\chi_c(\mathcal J_4)$.

See also \cite[Proposition E]{kannan2026virtualhodgenumbersmathcalmg} for related work, expressing the Euler characteristics $\chi_c^{S_n}(\mathcal J_{g,n})$ in terms of the Euler characteristics $\chi_c^{S_n}(\mathcal C_{g}^n)$, for fixed $g$, where $\mathcal C_{g}^n$ is the $n$-fold fibered power of the universal curve over $\mathcal M_g$. The weight-zero and integer-valued Euler characteristics of $\mathcal J_{g,n}$ were studied in \cite{kannan}.

\begin{appendix}
\section{Geometry of the universal Jacobian over
\texorpdfstring{$\mathcal{M}_g$}{M\_g}} \label{Sect:Misc}


The following result was first explained to us by Jeremy Feusi. The  referee pointed out that the key step of the proof appeared already in \cite[Corollary 6.1]{KP19}, with essentially the same argument; the proof given here has been shortened accordingly.

\begin{theorem}
For $g \geq 1$ and $2g-2+n>0$, the universal Jacobians 
$\mathcal{J}_{g,n}^d$ and
$\mathcal{J}_{g,n}^{d'}$ are $S_n$-equivariantly isomorphic over $\mathcal{M}_{g,n}$
if and only if 
$$d\equiv d' \quad \text{or}\quad -d\equiv d'
\mod \gcd(2g-2,n)\, .$$
\end{theorem}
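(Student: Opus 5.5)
Set $m=\gcd(2g-2,n)$. I will prove the two directions separately.

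\emph{Sufficiency.} For every $k\in\Z$ the twist
\[ (C,p_1,\dots,p_n,L)\longmapsto \bigl(C,p_1,\dots,p_n,L\otimes\omega_C^{\otimes a}\otimes\mathcal O(b(p_1+\dots+p_n))\bigr) \]
shifts the degree by $a(2g-2)+bn$. It is an $S_n$-equivariant isomorphism $\mathcal J_{g,n}^d\to\mathcal J_{g,n}^{d+a(2g-2)+bn}$ over $\mathcal M_{g,n}$. By B\'ezout, $a(2g-2)+bn$ ranges over all multiples of $m$, which handles $d'\equiv d$. Composing with the $S_n$-equivariant isomorphism $L\mapsto L^{-1}$ from $\mathcal J^d$ to $\mathcal J^{-d}$ handles $d'\equiv -d$.

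\emph{Necessity.} Suppose $\Phi:\mathcal J^d_{g,n}\to\mathcal J^{d'}_{g,n}$ is an $S_n$-equivariant isomorphism over $\mathcal M_{g,n}$. I pass to the generic point. Let $K$ be the function field of $\mathcal M_{g,n}/S_n$ (or of $\mathcal M_{g,n}$ with its $S_n$-action), let $C_\eta$ be the generic curve, and let $D_\eta=p_1+\dots+p_n$ be the generic divisor, which is $S_n$-invariant. The map $\Phi$ restricts to an isomorphism of torsors $\mathrm{Pic}^d_{C_\eta}\to\mathrm{Pic}^{d'}_{C_\eta}$ over $K$. Any isomorphism of torsors under abelian varieties is a translate of a group automorphism of $J=\mathrm{Pic}^0$. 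Since the generic Jacobian has $\mathrm{Aut}(J_\eta)=\{\pm1\}$ (for $g\ge 3$ by a generic curve having no extra endomorphisms; for $g=1,2$ one must additionally handle the $S_n$-twisted base, but the endomorphism ring of the generic Jacobian is still $\Z$), the torsor $[\mathrm{Pic}^{d'}]$ equals $\pm[\mathrm{Pic}^d]$ in $H^1(K,J_\eta)$. This class is computed on the $S_n$-quotient, and the descent of $\Phi$ to $\mathcal M_{g,n}/S_n$ is exactly where the $S_n$-equivariance is used.

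The key input, which is the content of \cite[Corollary 6.1]{KP19} and which I will invoke, is that the rational line bundles on the generic curve over $\mathcal M_{g,n}/S_n$ (equivalently the relative Picard group, a Franchetta-type statement) are generated by $\omega$ and $\mathcal O(D_\eta)$. Consequently $\mathrm{Pic}^e$ has a rational point if and only if $m\mid e$. In the group $H^1(K,J_\eta)$ the classes satisfy $[\mathrm{Pic}^e]=e[\mathrm{Pic}^1]$, so $[\mathrm{Pic}^1]$ has order exactly $m$. From $[\mathrm{Pic}^{d'}]=\pm[\mathrm{Pic}^{d}]$ we obtain $(d'\mp d)[\mathrm{Pic}^1]=0$, that is, $d'\equiv\pm d\pmod m$.

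The main obstacle is the necessity step, specifically justifying the strong Franchetta input on the $S_n$-quotient together with $\mathrm{Aut}(J_\eta)=\{\pm 1\}$ in low genus. For this I would cite \cite{KP19} directly rather than reprove it.
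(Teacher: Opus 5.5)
Your sufficiency argument is the same as the paper's. The necessity argument has a genuine gap, and it sits exactly where $S_n$-equivariance has to be used.

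\textbf{The cited input is not what you need.} The result the paper takes from \cite[Corollary 6.1]{KP19} is the classification $(\dagger)$. It says every isomorphism $\mathcal{J}^d_{g,n}\to\mathcal{J}^{d'}_{g,n}$ over $\mathcal{M}_{g,n}$ has the form $L\mapsto L^{\otimes\epsilon}\otimes\omega_C^{\otimes a}(\sum_i a_ip_i)$, with the $a_i$ \emph{independent}. It says nothing about the $S_n$-quotient. The statement you attribute to it is that over $\mathcal{M}_{g,n}/S_n$ only $\omega^a(bD)$ survive, so that $[\mathrm{Pic}^1]$ has order exactly $m$. That statement carries all of the equivariant content of the theorem, and deriving it requires the step you omitted.

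The missing step is as follows. A class $\omega^{a}(\sum a_ip_i)$ compatible with the $S_n$-action must satisfy $\mathcal{O}_C\cong\mathcal{O}_C((a_1-a_i)(p_i-p_1))$ for each transposition $(1\,i)$. Since $p_i-p_1$ is non-torsion on a general pointed curve of genus $g\ge 1$, all $a_i$ coincide. The degree count then gives $d'-\epsilon d\in(2g-2)\mathbb{Z}+n\mathbb{Z}$. There is a further point: a $K$-point of $\mathrm{Pic}^e$ need not be a line bundle over $K$. You would have to pass to a field where the $p_i$ are rational before invoking a Franchetta statement at all.

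\textbf{The descent to the coarse function field fails in a case covered by the theorem.} Your framework breaks down when $[\mathcal{M}_{g,n}/S_n]$ has a nontrivial generic stabilizer. Take $(g,n)=(1,2)$. The automorphism $\phi\colon x\mapsto p_1+p_2-x$ swaps the two marked points, so $S_2$ acts trivially on $\mathbb{C}(M_{1,2})$, and your $K$ is just $\mathbb{C}(M_{1,2})$. Since $\mathcal{M}_{1,2}$ has trivial generic stabilizer, the generic curve is defined over $K$ together with its sections $p_1,p_2$. Then $\mathcal{O}(p_1)$ is a $K$-rational degree-one line bundle, so $[\mathrm{Pic}^1]=0$ in $H^1(K,J_\eta)$. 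But $m=\gcd(0,2)=2$. So your intermediate claim that $\mathrm{Pic}^e$ has a rational point iff $m\mid e$ is false here. Your argument therefore cannot separate $d=0$ from $d=1$, even though the theorem asserts these are not equivariantly isomorphic.

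To repair this inside your framework, you must remember the generic $\mathbb{Z}/2$-stabilizer. The torsor isomorphism $L\mapsto L^{\otimes\epsilon}\otimes M$ must also commute with $\phi^*$, which forces $M^{\otimes 2}\cong\mathcal{O}(p_1+p_2)$. That is not a statement about $H^1(K,J_\eta)$. The same kind of problem arises for $(g,n)=(2,0)$, where the hyperelliptic involution is a generic stabilizer acting by $-1$ on the Jacobian; the paper sidesteps $n=0$ by citing \cite{Cap94}.

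Your caveat that $\mathrm{End}(J_\eta)=\mathbb{Z}$ in low genus does not address any of this. The paper avoids both issues by working on $\mathcal{M}_{g,n}$ itself. There, equivariance is simply a condition on each fixed curve with \emph{labelled} points, and the transposition/non-torsion argument applies uniformly for all $g\ge 1$ and $n>0$.
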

\begin{proof}
The case $n=0$ was covered in \cite[Lemma 8.1]{Cap94}, so from now on we assume $n > 0$. Then, to prove the claim, we will use the result of \cite[Corollary 6.1]{KP19}:

($\dagger$) Every isomorphism $\varphi: \mathcal{J}_{g,n}^d \to \mathcal{J}_{g,n}^{d'}$ over $\mathcal{M}_{g,n}$ is of the form
\begin{equation*}
    T_{\textbf{a}, \epsilon}(C, p_1, \ldots, p_n, \mathcal{L}) = (C, p_1, \ldots, p_n, \mathcal{L}^{\otimes \epsilon} \otimes_{\mathcal{O}_C} \omega_C^{\otimes a}(\sum_{i=1}^n a_i p_i))\,,
\end{equation*}
where $\textbf{a} = (a, a_1, \ldots, a_n) \in \mathbb{Z}^{n+1}$ and $\epsilon \in \{\pm 1\}$. 

We will  argue that $T_{\textbf{a},\epsilon}$ being $S_n$-equivariant then 
forces all $a_1=\ldots = a_n$ to be equal.  Indeed, we must have
\begin{equation} \label{eqn:Feusi1}
\mathcal{L}^{\otimes \epsilon} \otimes_{\mathcal{O}_C}\omega_C^{\otimes a}(\sum_{i=1}^n a_i p_i)\cong \mathcal{L}^{\otimes \epsilon} \otimes_{\mathcal{O}_C}\omega_C^{\otimes a}(\sum_{i=1}^n a_i p_{\tau(i)})
\end{equation}
for any permutation $\tau\in S_n$, since the action $S_n \curvearrowright \mathcal{J}_{g,n}^d$ leaves the line bundle unchanged. The equality \eqref{eqn:Feusi1} is equivalent to
\begin{equation}
    \mathcal{O}_C \cong \mathcal{O}_C(\sum_{i=1}^n a_i(p_i-p_{\tau(i)}))\,.
\end{equation}
For $i \geq 2$, let $\tau$ be the transposition of $1$ and $i$. Then, we get the equality 
$$\mathcal{O}_C \cong \mathcal{O}_C((a_1-a_i)(p_i-p_1))\, .$$
Since $g \geq 1$, generically $\mathcal{O}_C(p_i-p_1)$ is not a torsion point. Hence, $a_1=a_i$. 
Writing $a_1=\cdots=a_n=b$, such a transformation has relative degree
\[
   \epsilon d + a(2g-2)+bn .
\]
Hence an $S_n$-equivariant isomorphism of the above form from degree $d$ to degree $d'$ can exist only if
\[
   d'-\epsilon d \in (2g-2)\mathbb Z+n\mathbb Z,
\]
or equivalently
\[
   d' \equiv \epsilon d \pmod{\gcd(2g-2,n)}.
\]
Conversely, if this congruence holds, choose integers $a,b$ such that
\[
   d'-\epsilon d = a(2g-2)+bn.
\]
Then $T_{\mathbf a,\epsilon}$ with $a_1=\cdots=a_n=b$ gives an $S_n$-equivariant isomorphism. 
This proves the stated criterion.
\end{proof}


\begin{theorem}[Qizheng Yin]
\label{thm:Jac-degree-distinguishes-K0}
Fix an integer $g\ge 22$ and let $d,\widehat d\in \mathbb{Z}$. Then{\footnote{For Theorem \ref{thm:Jac-degree-distinguishes-K0}, $[\mathcal{J}_g^{d}]$ denotes the class of the coarse moduli space in the Grothendieck group of varieties $K_0(\mathrm{Var}_\C)$.}}
\[
[\mathcal{J}_g^d] = [\mathcal{J}_g^{\widehat d}]
\quad\text{in}\quad
K_0(\mathrm{Var}_\C)
\]
if and only if $
\widehat d \equiv \pm d \mod{2g-2}$.
\end{theorem}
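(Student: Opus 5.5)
The plan is to handle the ``if'' direction by a direct isomorphism and the ``only if'' direction by passing from $K_0(\mathrm{Var}_\C)$ to birational geometry via Larsen--Lunts, and then exploiting the hyperbolicity of $\mathcal M_g$.

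\textbf{The ``if'' direction.} If $\widehat d\equiv \pm d \bmod 2g-2$, then \cite[Lemma 8.1]{Cap94} gives an isomorphism $\mathcal J_g^d\cong\mathcal J_g^{\widehat d}$ over $\mathcal M_g$, namely $L\mapsto L^{\pm1}\otimes\omega_C^{\otimes k}$. This isomorphism induces an isomorphism of coarse moduli spaces, so the two classes agree in $K_0(\mathrm{Var}_\C)$.

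\textbf{Reduction to birationality.} For the ``only if'' direction I would use the Larsen--Lunts theorem: $K_0(\mathrm{Var}_\C)/(\mathbb L)\cong \Z[\mathrm{SB}]$, the free abelian group on stable birational classes of smooth projective connected varieties.
\begin{enumerate}[(1)]
\item Write $J^d$ for the coarse space, of dimension $N=4g-3$, and choose a smooth projective compactification $\overline{J^d}$ whose boundary is a union of strata of smaller dimension.
\item Modulo $\mathbb L$, the class $[J^d]$ is then $[\overline{J^d}]$ plus a $\Z$-combination of stable birational classes of smooth projective varieties of dimension $<N$.
\item For $g\ge 22$ the moduli space $\mathcal M_g$ has nonnegative Kodaira dimension, and is of general type for $g\ge 24$ (Harris--Mumford, Eisenbud--Harris, Farkas). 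I would use this to show that $\overline{J^d}$ is not uniruled. Since $J^d\to M_g$ has abelian varieties as fibres and a non-uniruled base, any rational curve would have to lie in a fibre, which is impossible.
\item For non-uniruled smooth projective varieties, stable birationality coincides with birationality. Moreover, a non-uniruled $N$-fold is not stably birational to any variety of smaller dimension, since MRC quotients are preserved.
\item Hence the class $[\overline{J^d}]$ appears in $\Z[\mathrm{SB}]$ as the unique basis element of ``non-uniruled dimension $N$'' (with coefficient $1$) in the expansion of $[J^d]$. So $[J^d]=[J^{\widehat d}]$ forces $\overline{J^d}$ and $\overline{J^{\widehat d}}$ to be birational.
\end{enumerate}
Making step (5) rigorous, i.e.\ isolating the top-dimensional non-uniruled term without cancellation against lower-dimensional boundary strata, is the first delicate point.

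\textbf{From a birational map to fibres over $\mathcal M_g$.} Let $\psi: J^d\dashrightarrow J^{\widehat d}$ be birational, and compose it with the projection to $M_g$ followed by the Torelli map to $A_g$. Restricted to a general fibre, which is an abelian variety $A$, this gives a rational map $A\dashrightarrow A_g$.
\begin{itemize}
\item The map extends off a codimension-$2$ locus, and after passing to a level-structure cover it lifts to a holomorphic map from $\C^g$ minus an analytic set of codimension $2$ into the Siegel upper half space.
\item The Siegel space is a bounded domain, so Hartogs extension and Liouville force the map to be constant.
\end{itemize}
Thus $\psi$ commutes with the projections to $M_g$ generically. It therefore induces a birational map between the generic fibres $\mathrm{Pic}^d_{C_\eta}$ and $\mathrm{Pic}^{\widehat d}_{C_\eta}$ over $K=\C(M_g)$, where $C_\eta$ is the generic curve. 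For $g\ge 3$ the generic curve has no automorphisms, so the coarse space agrees with the stack there. These are torsors under the Jacobian of $C_\eta$, and a birational map between abelian torsors extends to an isomorphism of $K$-varieties, because torsors contain no rational curves.

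\textbf{Concluding step and main obstacle.} It remains to show that $\mathrm{Pic}^d_{C_\eta}\cong \mathrm{Pic}^{\widehat d}_{C_\eta}$ as $K$-varieties forces $\widehat d\equiv\pm d \bmod 2g-2$. Here I would run the argument of \cite[Lemma 8.1]{Cap94}:
\begin{itemize}
\item Any such isomorphism is a translate composed with a group automorphism of the Jacobian of $C_\eta$, and this automorphism group is $\{\pm1\}$ for the generic curve.
\item The translate is a $K$-rational point of a torsor, that is, a $K$-rational degree-$e$ class. By Franchetta's conjecture (Mestrano, Arbarello--Cornalba), the rational Picard group of $C_\eta$ is generated by $\omega$, so $e\equiv 0\bmod 2g-2$.
\end{itemize}
I expect the main obstacle to be the Larsen--Lunts bookkeeping and the non-uniruledness input in the low range $g=22,23$, where one only has nonnegative Kodaira dimension of $\mathcal M_g$ rather than general type. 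The threshold $g\ge 22$ is exactly where this input is available.
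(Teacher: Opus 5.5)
\textbf{Gap in the step from a birational map to compatibility with the fibration.} Your Larsen--Lunts reduction is essentially the paper's Step~1: your MRC-quotient argument does the work of Liu--Sebag in the paper's no-cancellation lemma, and deducing non-uniruledness from $\kappa(\Mbar_g)\ge 0$ is fine. The genuine gap is the step showing that a birational map $\psi\colon J^d\dashrightarrow J^{\widehat d}$ respects the projections to $M_g$.

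A rational map from a general fibre $A$ extends off codimension~$2$ only into a \emph{proper} target, such as $\Mbar_g$ or the Satake compactification of $\mathcal A_g$. Nothing in your argument prevents $\psi$ from sending a horizontal divisor of $J^d$ into the boundary of a compactification of $J^{\widehat d}$. In that case the locus of $A$ where your map lands in $\mathcal A_g$ (let alone lifts through a level cover) has divisorial complement. On the universal cover of $A$ minus a divisor, Hartogs plus Liouville give nothing.

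In fact your argument uses no property of $\psi$, so it would prove that every rational map from an abelian variety to $\mathcal A_g$ is constant. That is false. Compose a nonconstant rational function $A\dashrightarrow\mathbb P^1$ with the classifying map of $E_\lambda\times B$, where $E_\lambda$ is the Legendre family over $\mathbb P^1\setminus\{0,1,\infty\}$ and $B$ is a fixed principally polarized abelian variety of dimension $g-1$.

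\textbf{What is needed instead, and what the paper does.} Hyperbolicity of the base cannot by itself force the fibres to be contracted. You need an input that sees $\psi$ as a birational map of total spaces. The natural one is Kodaira dimension. The canonical class of a smooth projective model $X$ restricts trivially to a general fibre, so pluricanonical maps contract the fibres. Then $\kappa(X)=3g-3=\dim M_g$ (which the paper quotes from \cite[Theorem~1.3]{viv}) makes the projection birationally the Iitaka fibration, and every birational map preserves the Iitaka fibration.

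Even so, you only get $p'\circ\psi=\beta\circ p$, with $p,p'$ the projections to $M_g$ and $\beta$ a birational self-map of $M_g$. You still must show $\beta=\mathrm{id}$, for example by Torelli plus $\mathrm{NS}(J(C))\cong\Z$ for general $C$; your sketch passes over this. Your last step also needs the strong Franchetta statement about $K$-points of $\mathrm{Pic}_{C_\eta/K}$ (Mestrano), not just the statement about line bundles on $C_\eta$.

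The paper sidesteps all of this. Its Step~2 simply cites the birational classification \cite[Theorem~1.7]{viv}: for $g\ge 22$, the spaces $\Jcal_g^d$ and $\Jcal_g^{\widehat d}$ are birational if and only if $\widehat d\equiv\pm d\pmod{2g-2}$. That is exactly what the second half of your proposal tries to reprove.
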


For the proof of Theorem \ref{thm:Jac-degree-distinguishes-K0}, we will use a result by Larsen--Lunts \cite[Section 2.5]{LarsenLunts2003}: {\em there is a ring homomorphism
\[
\Phi_{\mathrm{sb}}:K_0(\mathrm{Var}_\C)\longrightarrow \mathbb{Z}[\mathrm{SB}]
\]
to the monoid ring on stable birational equivalence classes of smooth projective varieties $X$, which sends the class $[X] \in K_0(\mathrm{Var}_\C)$ of such a smooth projective variety to $1 \cdot (X) \in \mathbb{Z}[\mathrm{SB}]$}. To extend it to arbitrary varieties $U$, we choose a smooth projective birational model $X$ of $U$ with a common open $V \subset U, V \subset X$, and obtain $$\Phi_{\mathrm{sb}}([U]) = (X) - \Phi_{\mathrm{sb}}([X \setminus V]) + \Phi_{\mathrm{sb}}([U \setminus V])\, .$$ The last two terms above are defined inductively (since they are of smaller dimension). In particular, we obtain an expression
\begin{equation} \label{eq:Phisb}
\Phi_{\mathrm{sb}}([U]) \;=\; (X) \;+\; \sum_i n_i (V_i) \quad\text{in}\quad
\mathbb Z[\mathrm{SB}]\,,
\end{equation}
where each $V_i$ is smooth projective with $\dim V_i < \dim X$.

To apply the Larsen--Lunts construction in the proof of \cref{thm:Jac-degree-distinguishes-K0}, we will require the following result.

\begin{lemma}[No cancellation of the top stable birational class for $\kappa\ge 0$]
\label{lem:no-cancellation-phi-sb}
Let $U$ be an irreducible complex variety, and let $X$ be a smooth projective birational model of $U$.
Assume $\kappa(X)\ge 0$ (or weaker, that $X$ is not uniruled).
Then, no term $(V_i)$ in the expression \eqref{eq:Phisb} is equal to $(X)$ in $\mathrm{SB}$, hence the coefficient of $(X)$ is well-defined and equal to $1$.

If $U'$ is another irreducible variety admitting a smooth projective birational model $X'$, with $\dim U = \dim U'$ and $\Phi_{\mathrm{sb}}([U])=\Phi_{\mathrm{sb}}([U'])$ in $\mathbb Z[\mathrm{SB}]$,
then $(X)=(X')$.
Moreover, $X$ and $X'$ are birational.
\end{lemma}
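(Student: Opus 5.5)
We argue by induction on $\dim U$, using the defining recursion for $\Phi_{\mathrm{sb}}$. Recall the construction: choose a common open $V \subset U$, $V\subset X$, and write $\Phi_{\mathrm{sb}}([U]) = (X) - \Phi_{\mathrm{sb}}([X\setminus V]) + \Phi_{\mathrm{sb}}([U\setminus V])$. The boundary pieces $X\setminus V$ and $U\setminus V$ have dimension $<\dim X$. Stratify them into irreducible locally closed pieces $W$, and recursively replace each piece by a smooth projective model $Y_W$ with $\dim Y_W = \dim W < \dim X$. Every class $(V_i)$ appearing in \eqref{eq:Phisb} is then the stable birational class of some smooth projective variety of dimension strictly less than $\dim X$. (This includes the degenerate case of a point, or a disjoint union, which we treat componentwise.)

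The key step is to show that no smooth projective $V_i$ with $\dim V_i<\dim X$ can be stably birational to $X$ when $X$ is not uniruled. Suppose $V_i\times\mathbb P^a$ is birational to $X\times\mathbb P^b$. Dimension count gives $a>b$, since $\dim V_i<\dim X$. Then $X\times\mathbb P^b$ is birational to $V_i\times \mathbb P^{a}$ with $a\ge 1$, so $X\times \mathbb P^b$ is covered by rational curves lying in the $\mathbb P^a$-fibres, and these are not contracted to points in $X$ generically.

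More carefully, we use that uniruledness is a birational invariant, together with the following fact: if $Y\times\mathbb P^b$ is uniruled by rational curves whose images in $Y$ are nonconstant, then $Y$ is uniruled. Concretely, take the family of lines in $\mathbb P^a$-fibres of $V_i\times\mathbb P^a$ and transport it birationally to $X\times\mathbb P^b$. Through a general point these give a family of rational curves of dimension at least $a-1$. The curves contracted by the projection to $X$ lie in a $\mathbb P^b$-fibre, so they move in a family of dimension at most $b-1<a-1$ when $b\ge1$. Hence some curve maps nonconstantly to $X$, making $X$ uniruled. This is a contradiction.

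The cleaner route, which we would try first, is via the MRC quotient / Graber–Harris–Starr: the MRC quotient of $X\times\mathbb P^b$ is $X$ itself, because $X$ is not uniruled. That quotient has dimension $\dim X$, while the MRC quotient of $V_i\times\mathbb P^a$ has dimension at most $\dim V_i<\dim X$. Since MRC quotients are birational invariants, this gives the contradiction. This yields the first assertion: the coefficient of $(X)$ in \eqref{eq:Phisb} is exactly $1$, because the $(V_i)$ terms, with any signs, never coincide with $(X)$.

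For the second part, let $\Phi_{\mathrm{sb}}([U])=\Phi_{\mathrm{sb}}([U'])$ with $\dim U=\dim U'$. By the first part, $(X)$ appears on the left with coefficient $1$. On the right, $(X)$ must equal either $(X')$ or some lower-dimensional $(V'_j)$. The latter is impossible by the MRC-dimension argument above applied to $X$, since $X$ is not uniruled and $\dim V'_j<\dim X$. Hence $(X)=(X')$.

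To see that $X$ and $X'$ are birational, note that stable birationality in equal dimension between non-uniruled varieties implies birationality. From $X\times\mathbb P^a\sim X'\times\mathbb P^b$ with $\dim X=\dim X'$ we get $a=b$. The MRC fibrations of both sides are the projections to $X$ and to $X'$ respectively, because both are non-uniruled: $X'$ is non-uniruled since it is stably birational to $X$, and the MRC quotient dimension is a stable birational invariant. By functoriality of MRC quotients, the birational map induces a birational map $X\dashrightarrow X'$.

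The main obstacle is the invariance of the MRC quotient under birational maps and its identification for $Y\times\mathbb P^b$. This needs Graber–Harris–Starr so that the MRC base of a product with projective space is exactly the non-uniruled factor. We would simply cite these standard facts.
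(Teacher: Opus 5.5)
Your proof is correct, and its skeleton matches the paper's. You exclude $(V_i)=(X)$, compare the coefficient of $(X)$ in the free abelian group $\mathbb Z[\mathrm{SB}]$, and then upgrade equal-dimensional stable birationality to birationality.

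The difference is in how the birational-geometry input is supplied. The paper uses Liu--Sebag's Corollary~1(1) as a black box: smooth projective, stably birational, same dimension, and non-uniruled together imply birational. It uses this twice:
\begin{itemize}
\item For the first assertion, it pads $V_i$ to $V_i\times\mathbb P^r$ with $r=\dim X-\dim V_i$. It gets $X$ birational to $V_i\times\mathbb P^r$, which contradicts non-uniruledness of $X$.
\item For the last assertion, it applies the same citation to $X$ and $X'$.
\end{itemize}
You instead derive what is needed from MRC quotients. They are birational invariants of smooth projective varieties. The MRC quotient of $X\times\mathbb P^b$ is $X$ when $X$ is not uniruled, and that of $Y\times\mathbb P^m$ factors through $Y$. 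Together these show that any smooth projective $Y$ with $(Y)=(X)$ has $\dim Y\ge\dim X$, with equality only if $Y$ is birational to $X$. That single statement covers both the exclusion of the lower-dimensional $(V_i)$, $(W_j)$ and the final birationality.

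Your argument is essentially the standard proof of the cited Liu--Sebag result. So your version is more self-contained and explains \emph{why} the class $(X)$ is rigid, at the cost of invoking MRC theory. In fact you do not need Graber--Harris--Starr, which you flagged as the main obstacle:
\begin{itemize}
\item That the MRC quotient of $X\times\mathbb P^b$ is $X$ follows directly from non-uniruledness of $X$. A rational chain leaving $\{x\}\times\mathbb P^b$ at a very general point would project to a nonconstant rational curve through a very general point of $X$.
\item For $X'$, the MRC quotient of $X'\times\mathbb P^a$ factors through $X'$ and has dimension $\dim X=\dim X'$. Since it has connected fibres, $X'$ is birational to it, hence to $X$.
\end{itemize}

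One caution about your preliminary curve-counting argument. The claim that contracted curves through a point ``move in a family of dimension at most $b-1$'' is not justified as stated, because rational curves through a point of $\mathbb P^b$ can move in much larger families. The correct version is a dimension count on the union. The image of a general fibre $\{v\}\times\mathbb P^a$ under the birational map is $a$-dimensional, so it cannot lie in a $b$-dimensional fibre $\{x\}\times\mathbb P^b$ when $a>b$. Its projection is therefore a positive-dimensional unirational subvariety through a general point of $X$. Since you rest the proof on the MRC route, this does not create a gap.
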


\begin{proof}
We first show that $(X)$ cannot coincide with any $(V_i)$.
Assume for contradiction that $(X)=(V_i)$ in $\mathrm{SB}$ for some $i$. Set $d=\dim X$ and $e=\dim V_i$. 
By construction $e<d$, and we write $r=d-e>0$. 
The equality $(X)=(V_i)$ shows that $X$ and $V_i \times \mathbb{P}^r$ are smooth projective varieties that are stably birational and of the same dimension. But since $X$ is non-uniruled, Liu--Sebag's \cite[Corollary 1(1)]{LiuSebag2010} yields that $X$ and $V_i \times \mathbb{P}^r$ are actually birational. This contradicts the uniruledness of $V_i \times \mathbb{P}^r$.

Consequently no cancellation of the basis element $(X)$ occurs in $\mathbb Z[\mathrm{SB}]$,
and the coefficient of $(X)$ in $\Phi_{\mathrm{sb}}([U])$ is well-defined and equal to $1$.

For the final assertion, write (using \eqref{eq:Phisb}) expansions
\[
\Phi_{\mathrm{sb}}([U])=(X)+\sum_i n_i(V_i)
\quad\text{and}\quad
\Phi_{\mathrm{sb}}([U'])=(X')+\sum_j m_j(W_j)\,,
\]
where $\dim V_i, \dim W_j<\dim X = \dim X'$ for all $i,j$. Moreover, we have $(X) \neq (V_i)$ for all $i$, and repeating the argument above we also have $(X) \neq (W_j)$ for all $j$.
Now recall that $\mathbb Z[\mathrm{SB}]$ is, by definition, the free abelian group on the set
$\mathrm{SB}$ (equivalently, the monoid ring of $\mathrm{SB}$), so every element has a unique
expression as a finite $\mathbb Z$-linear combination of the basis symbols $(T) \in  \mathrm{SB}$.
Thus, if $\Phi_{\mathrm{sb}}([U])=\Phi_{\mathrm{sb}}([U'])$, then the coefficient of the basis
element $(X)$ on the left (which is $1$) must equal its coefficient on the right.
Hence it must be
that $(X)=(X')$ in~$\mathrm{SB}$, so\ $X$ and $X'$ are stably birational.

Finally, since $X$ and $X'$ are smooth projective varieties that are stably birational and of the same dimension, and since $X$ is non-uniruled, \cite[Corollary 1(1)]{LiuSebag2010} again yields that $X$ and $X'$ are birational.
\end{proof}

\begin{proof}[Proof of \cref{thm:Jac-degree-distinguishes-K0}]
If $\widehat d\equiv \pm d \mod{2g-2}$, then $\mathcal{J}_g^d$ and $\mathcal{J}_g^{\widehat d}$ are easily seen to be isomorphic; hence $[\mathcal{J}_g^d]=[\mathcal{J}_g^{\widehat d}]$ in $K_0(\mathrm{Var}_\C)$. The other direction is done in two steps:

\vspace{8pt}
\noindent\textbf{Step 1: apply $\Phi_{\mathrm{sb}}$ and use Lemma \ref{lem:no-cancellation-phi-sb}.}
Applying the ring homomorphism $\Phi_{\mathrm{sb}}$ to the assumed equality
$[\mathcal{J}_g^d]=[\mathcal{J}_g^{\widehat d}]$ in $K_0(\mathrm{Var}_\C)$ gives
\[
\Phi_{\mathrm{sb}}\bigl([\mathcal{J}_g^d]\bigr)
=
\Phi_{\mathrm{sb}}\bigl([\mathcal{J}_g^{\widehat d}]\bigr)
\quad\text{in}\quad
\mathbb Z[\mathrm{SB}]\,.
\]
Let $X$ and $X'$ be smooth projective birational models of $\mathcal{J}_g^d$ and $\mathcal{J}_g^{\widehat d}$,
respectively. By \cite[Theorem~1.3]{viv}, for $g\ge 22$, we have 
$$\kappa(X)(=\kappa(X'))=3g-3\ge 0\,.$$
Moreover, $\dim(\mathcal{J}_g^d)=\dim(\mathcal{J}_g^{\widehat d}) =4g-3$.
Therefore Lemma~\ref{lem:no-cancellation-phi-sb} applies to
$U=\mathcal{J}_g^d$ and $U'=\mathcal{J}_g^{\widehat d}$ and yields that $X$ and $X'$ are birational.
In particular, $\mathcal{J}_g^d$ and $\mathcal{J}_g^{\widehat d}$ are birational.

\vspace{8pt}
\noindent\textbf{Step 2: birational classification.}
By \cite[Theorem~1.7]{viv}, for $g\ge 22$,  $\mathcal{J}_g^d$ is birational to
$\mathcal{J}_g^{\widehat d}$ if and only if $\widehat d\equiv \pm d \mod{2g-2}$. Therefore $[\mathcal{J}_g^d] =  [\mathcal{J}_g^{\widehat d}]$ in $K_0(\mathrm{Var}_\C)$ forces $\widehat d\equiv \pm d \mod{2g-2}$.
\end{proof}


\end{appendix}
\printbibliography
\end{document}